\newtheorem{theorem}{Theorem}[section]
\newtheorem{corollary}[theorem]{Corollary}
\newtheorem{lemma}[theorem]{Lemma}
\newtheorem{proposition}[theorem]{Proposition}
\newtheorem{definition/theorem}{Definition/Theorem}
\newtheorem{definition/lemma}{Definition/lemma}
\theoremstyle{definition} \newtheorem{definition}[theorem]{Definition}
\newtheorem{remark}[theorem]{Remark}
\newcommand{\ov}[1]{\overline{#1}}
\newcommand{\op}[1]{\operatorname{#1}}
\newcommand{\ovop}[1]{\ov{\op{#1}}}
\begin{document}

%\title{Divisors that give extensions of the Torelli map}
\title{On extensions of the Torelli map }

\author{Angela Gibney}

\begin{abstract}
The divisors on $\ovop{M}_g$ that arise as the pullbacks of ample divisors along any extension of the Torelli map to any toroidal compactification of $\op{A}_g$  form a 2-dimensional extremal face 
of the nef cone of $\ovop{M}_g$, which is explicitly described.
\end{abstract}

\maketitle
\section{introduction} 
   
The moduli spaces $\op{M}_g$, of smooth curves of genus g, and $\op{A}_g$, of $g$-dimensional principally polarized Abelian varieties are related by  a natural embedding $\mathfrak{t_g}: \op{M}_g \rightarrow \op{A}_g,$
that sends the isomorphism class of a smooth genus $g$ curve $C$ to a pair consisting of
the isomorphism class of its Jacobian together with its theta divisor
$\Theta_{g-1}$.  The map is named for Torelli who proved that it is injective \cite{Weil}.

Both $\op{M}_g$ and  $\op{A}_g$ are quasi-projective varieties.   The Deligne-Mumford compactification $\ovop{M}_g$ is obtained by adding nodal curves having a finite number of automorphisms.  While this is often taken to be the best compactification of $\op{M}_g$, 
the closure in any compactification of $\op{A}_g$ of the image of
$\op{M}_g$ under the Torelli map gives an alternative one.    In particular, if the Torelli map extends to a morphism from $\ovop{M}_g$
to that compactification of $\op{A}_g$, then one may be able to say something interesting about the moduli space of curves.  For example,
the Satake compactification $\op{A}^{Sat}_g$ is
the normal projective variety constructed by taking Proj of the graded ring of
Siegel modular forms  \cite{SatakeCompactification},
\cite{FaltingsChai}.  The Torelli map extends to a morphism $f_{\lambda}: \ovop{M}_g \longrightarrow \ovop{A}^{Sat}_g$, and 
 by studying the image $\ovop{M}_g^{Sat}$ in $\op{A}^{Sat}_g$, one can prove  the very interesting and nontrivial fact that 
$\op{M}_g$ contains a complete curve for $g \geq 3$ \cite{Oort}.

There are infinitely many toroidal compactifications $\op{A}_g^{\tau}$ of $\op{A}_g$, each dependent on a
choice of fan $\tau$, which describes a decomposition of the
cone of real positive quadratic forms in $g$ variables (\cite{AMRT}, \cite{FaltingsChai}, \cite{Namikawa}, \cite{Olsson}).     
 Each compactification $\op{A}_g^{\tau}$ contains a common sublocus, $\op{A}_g^{part}$, the partial compactification of $\op{A}_g$ defined by  Mumford.   Also, for each $\tau$, there is a morphism $\eta_{\tau}$ from  $\op{A}_g^{\tau}$ to Satake's compactification 
$A^{Sat}_g$.   The first result of this short note is that any regular extension of the Torelli map from $\ovop{M}_g$ to a compactification of $\op{A}_g$ having these properties is given by a sublinear system of a divisor lying on what will be shown to be a 2-dimensional extremal face of the nef cone of $\ovop{M}_g$:

\begin{theorem}\label{general} Let $X$ be any compactification of $\op{A}_g$ that contains Mumford's partial compactification $\op{A}_g^{part}$ and maps to Satake's compactification $\op{A}_g^{Sat}$.  
Then if $f: \ovop{M}_g \longrightarrow X$ is any extension of the Torelli map and $A$ any ample divisor on $X$, then $f^*(A)$ lies in the interior of the face  $\mathbb{F}= \{\alpha \lambda + \beta (12\lambda -\delta_0) |  \ \alpha, \beta \ge 0\}$.   
\end{theorem}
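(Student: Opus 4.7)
The plan is to show $f^*A = a\lambda + b\delta_0$ with $b<0$ and $a+12b>0$; expressed in the basis $\{\lambda,\, 12\lambda-\delta_0\}$ of $\mathbb{F}$ this becomes $f^*A = (a+12b)\lambda + (-b)(12\lambda-\delta_0)$, with both coefficients strictly positive, placing $f^*A$ in the interior of $\mathbb{F}$. I proceed in three steps, implicitly assuming $g\geq 3$; the small genus cases can be handled separately.

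\emph{Isolating the $\lambda$-$\delta_0$ plane.} For each $i\in\{1,\ldots,\lfloor g/2\rfloor\}$, fix smooth curves $C_1, C_2$ of genera $i$ and $g-i$ and consider the test curves $F_i, F_i'\subset\delta_i$ obtained by varying the attachment point on $C_1$ or on $C_2$ respectively. The principally polarized Jacobian of $C_1\cup_p C_2$ is the product $(JC_1\times JC_2,\, \Theta_{C_1}\boxtimes 1 + 1\boxtimes\Theta_{C_2})$, independent of $p$; hence every extension $f$ of the Torelli map contracts $F_i$ and $F_i'$ to points, giving $f^*A\cdot F_i = f^*A\cdot F_i' = 0$. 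Using the standard intersection numbers $F_i\cdot\delta_i = 2-2i$ and $F_i'\cdot\delta_i = 2-2(g-i)$ (all other intersections with $\lambda, \delta_0, \delta_j$ for $j\neq 0,i$ vanish), and noting that at least one of these is nonzero for $g\geq 3$, the $\delta_j$-coefficient of $f^*A$ vanishes for each $j\geq 1$. Thus $f^*A = a\lambda + b\delta_0$.

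\emph{The sign of $b$.} Fix a pointed smooth curve $(\widetilde{C},p)$ of genus $g-1$ and let $B$ be the test curve $\widetilde{C}\to\delta_0$, $q\mapsto[\widetilde{C}/(p\sim q)]$; standardly $\lambda\cdot B = 0$ and $\delta_0\cdot B = 4-2g < 0$. Because $\op{A}_g^{part}\subset X$, uniqueness of extensions of morphisms from a dense open of $\ovop{M}_g$ to the separated target $X$ forces $f|_{\delta_0}$ at its generic points to coincide with Mumford's semi-Abelian extension sending $[\widetilde{C}/(p\sim q)]$ to $(J\widetilde{C},\mathcal{O}(p-q))\in\op{A}_g^{part}$. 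Restricted to $B$ this is the Abel map $q\mapsto\mathcal{O}(p-q)$, which is nonconstant for $g\geq 2$. Thus $f(B)$ is a genuine curve in $X$; ampleness of $A$ yields $f^*A\cdot B>0$, and combined with $\lambda\cdot B = 0$ this forces $b(4-2g)>0$, so $b<0$.

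\emph{The sign of $a+12b$.} Let $L$ be an ample line bundle on $\op{A}_g^{Sat}$ and $\pi\colon X\to\op{A}_g^{Sat}$ the given morphism. Then $\pi^*L$ is nef on $X$ and $f^*\pi^*L = c\lambda$ for some $c>0$ (Mumford's identification of the Hodge bundle). By openness of the ample cone, $A-\epsilon\pi^*L$ is ample on $X$ for $\epsilon\in[0,\epsilon_0)$, some $\epsilon_0>0$; pulling back and taking the limit $\epsilon\to\epsilon_0$ (using closedness of the nef cone), the class $(a-\epsilon_0 c)\lambda + b\delta_0$ is nef on $\ovop{M}_g$. The slope-$12$ inequality---that every nef class $p\lambda+q\delta_0$ satisfies $p+12q\geq 0$, classically established via a covering family like a Lefschetz pencil of plane curves (Cornalba--Harris/Moriwaki)---then yields $a-\epsilon_0 c+12b\geq 0$, hence $a+12b\geq\epsilon_0 c>0$. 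The main obstacle is importing the slope-$12$ bound; conveniently, it combines with the contracted curves $F_i, F_i'$ of the first step to also establish the face-ness of $\mathbb{F}$ in $\op{Nef}(\ovop{M}_g)$ claimed by the theorem.
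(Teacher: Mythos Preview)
Your proof is correct and takes a genuinely different route from the paper's. The paper argues structurally: it uses that the kernel of restriction $j^*:\operatorname{Pic}(\overline{\operatorname{M}}_g)_{\mathbb Q}\to\operatorname{Pic}(\operatorname{M}_g^\star)_{\mathbb Q}$ is $\mathbb Q\delta_0$ to reduce to the $(\lambda,\delta_0)$-plane, then invokes Hulek--Sankaran's computation of $\operatorname{Nef}(\operatorname{A}_g^{part})$ to control $i^*A$, and finally rules out the two extremal rays of $\mathbb{F}$ by a Picard-number argument (if $f^*A$ spanned an extremal ray then $f(\overline{\operatorname{M}}_g)$ would have Picard number~$1$, contradicting the embedding $\operatorname{A}_g^{part}\hookrightarrow X$). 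You instead work entirely with explicit curves: the compact-type families $F_i,F_i'$ kill the $\delta_{j\geq1}$-coefficients because $f|_{\operatorname{M}_g^\star}$ must agree (by separatedness) with the product-Jacobian map; the Abel-map curve in $\delta_0$ forces $b<0$ because its image in $D_g\subset \operatorname{A}_g^{part}\subset X$ is visibly nonconstant; and your perturbation $A\rightsquigarrow A-\epsilon_0\pi^*L$ forces the strict inequality $a+12b>0$. Your approach is more elementary in that it avoids the Hulek--Sankaran input entirely; the paper's Picard-number endgame is cleaner but imports more.

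Two small remarks. First, the exact value $\delta_0\cdot B=4-2g$ depends on how one compactifies the family at $q=p$ (the stable limit there lies in $\Delta_0\cap\Delta_1$, so $B$ also meets $\delta_1$); only the sign matters for your argument, and that is certainly right. Second, your citation of Cornalba--Harris/Moriwaki and Lefschetz pencils for the slope-$12$ bound is heavier than needed: the $F$-curve $C_1$ of elliptic tails has $\lambda\cdot C_1=\tfrac{1}{12}$, $\delta_0\cdot C_1=1$, and $\delta_j\cdot C_1=0$ for $j\geq 2$, so for any nef class of the form $p\lambda+q\delta_0$ one gets $p/12+q\geq 0$ directly by intersecting with $C_1$.
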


  Mumford and Namikawa \cite{NamikawaNewCompact}, have shown that  $\mathfrak{t_g}$ extends to a morphism 
$f_{vor}: \ovop{M}_g \longrightarrow \op{A}^{vor}_g$ to the Voronoi compactification of $\op{A}_g$, and very recently Alexeev and Brunyate \cite{AB} have shown that 
the Torelli map 
extends to a morphism 
$f_{per}: \ovop{M}_g \longrightarrow \ovop{A}^{per}_g$,
to the perfect cone compactification.    
By using the morphism $f_{per}$, it is possible to see that every divisor in $\mathbb F$ gives rise to such an extension of the Torelli map:

\begin{theorem}\label{perfect}Let $f_{per}: \ovop{M}_g \longrightarrow \op{A}_g^{per}$ be the Toroidal extension of the Torelli map defined in \cite{AB}.
Then $$\mathbb{F}=f_{per}^*(\operatorname{Nef}(\op{A}_g^{per})).$$
\end{theorem}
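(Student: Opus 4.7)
The strategy is to prove the two containments separately. For $f_{per}^{*}(\operatorname{Nef}(\op{A}_g^{per})) \subseteq \mathbb{F}$, I would observe that the perfect cone compactification is a toroidal compactification, so it contains $\op{A}_g^{part}$ and admits the morphism $\eta_{per}$ to $\op{A}_g^{Sat}$. Theorem \ref{general} then applies: pullbacks of ample divisors lie in the interior of $\mathbb{F}$. Since $\mathbb{F}$ is closed and the nef cone is the closure of the ample cone, pullbacks of nef divisors lie in $\mathbb{F}$.

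For the reverse containment, it suffices to exhibit two nef classes on $\op{A}_g^{per}$ whose pullbacks are the extremal rays $\lambda$ and $12\lambda - \delta_0$. The first ray is realized by the Hodge line bundle $L$ on $\op{A}_g^{per}$, which is defined via the determinant of the Hodge bundle of the universal semi-abelian scheme. By construction $f_{per}^{*}(L) = \lambda$, and $L$ is nef because it is the pullback under $\eta_{per}$ of the ample Hodge class on $\op{A}_g^{Sat}$ (where ampleness is built into Satake's construction as $\operatorname{Proj}$ of the ring of Siegel modular forms).

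For the second ray, consider the irreducible boundary divisor $D \subset \op{A}_g^{per}$ arising as the closure of $\op{A}_g^{part} \setminus \op{A}_g$. A direct computation, using the fact that $f_{per}$ sends a general point of $\Delta_0$ to a rank-one degeneration (the generalized Jacobian of a nodal curve), shows $f_{per}^{*}(D) = \delta_0$. One must also verify that no boundary component $\delta_i$ for $i \geq 1$ contributes, which follows because $f_{per}$ maps $\Delta_i$ for $i \geq 1$ to deeper boundary strata of $\op{A}_g^{per}$ (of codimension at least two). Granting this, $f_{per}^{*}(12L - D) = 12\lambda - \delta_0$, so the desired pullback identity reduces to the nefness of $12L - D$ on $\op{A}_g^{per}$.

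The main obstacle, then, is showing that $12L - D$ is nef on $\op{A}_g^{per}$. I would attack this by analyzing the toric boundary: the perfect cone decomposition prescribes explicit rays generating the cones of $\op{A}_g^{per}$, and nefness of $12L - D$ can be tested against curves lying in the boundary strata as well as against curves meeting the boundary transversally. The coefficient $12$ is sharp and reflects the identity $12L \equiv \kappa + D$ on toroidal compactifications (a version of the Mumford formula for semi-abelian families); using this, $12L - D$ becomes a class whose nefness can be checked against the finitely many extremal curve classes coming from the perfect cone fan, together with test curves inherited from $\ovop{M}_g$ such as those sweeping out $\Delta_0$. Controlling all these intersections simultaneously, and ruling out additional curve classes in $\op{A}_g^{per}$ on which $12L - D$ could be negative, is the delicate step.
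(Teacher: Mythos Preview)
Your argument for the containment $f_{per}^*(\operatorname{Nef}(\op{A}_g^{per})) \subseteq \mathbb{F}$ via Theorem~\ref{general} is fine and not circular. The real problem is in the other direction: you correctly isolate the nefness of $12L - D$ on $\op{A}_g^{per}$ as the ``main obstacle,'' but this is exactly the content of Shepherd-Barron's theorem \cite[Theorem~0.1]{SBFirst}, which the paper simply cites to obtain $\operatorname{Nef}(\op{A}_g^{per}) = \{aL - bD_g^{per} : a \geq 12b \geq 0\}$. Your proposed attack through the toric boundary and a Mumford-type identity is not a proof; carrying it out would amount to reproving Shepherd-Barron's result, and nothing in your sketch indicates how to control \emph{all} curve classes on $\op{A}_g^{per}$ (not just those pushed forward from $\ovop{M}_g$). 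Once Shepherd-Barron is invoked, both containments fall out at once from computing the pullbacks of the two extremal generators, which is what the paper does.

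There are also two smaller issues in your computation of $f_{per}^*(D)$. First, your justification that $\delta_i$ for $i \geq 1$ does not contribute is backwards: you say $\Delta_i$ maps to ``deeper boundary strata of codimension at least two,'' but if the image were contained in $D$ then $\Delta_i$ \emph{would} appear in the support of $f_{per}^*(D)$, regardless of the codimension of its image. The correct reason is that a general member of $\Delta_i$ (for $i \geq 1$) is of compact type, hence has Jacobian an honest abelian variety, so its image lies in $\op{A}_g$ and not in $D$ at all. The paper handles this uniformly via Lemma~\ref{keylemma}: since $\op{M}_g^{\star}$ maps into $\op{A}_g$, the class $f_{per}^*(D)$ restricts to zero on $\op{M}_g^{\star}$ and therefore lies in $\ker(j^*) = \mathbb{Q}\delta_0$. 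Second, you assert $f_{per}^*(D) = \delta_0$ with multiplicity exactly $1$ ``by direct computation,'' but give no such computation. The paper determines this constant by pushing forward the $F$-curve $C_1$ to Shepherd-Barron's extremal curve $C(1)$ and using the projection formula together with $C(1) \cdot (12L - D_g^{per}) = 0$.
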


In particular,  Theorem \ref{perfect} shows that all divisors in the interior of $\mathbb{F}$ are semi-ample.     As is proved in Corollary \ref{semiample}, it follows from \cite{SBFirst} that if we take $\ovop{M}_g$ to be defined over $\mathbb{C}$, then the nef divisor $12\lambda-\delta_0$ is actually semi-ample for $g \le 11$. To exhibit the nontriviality of these statements, in Remark \ref{bpf} we explain that one cannot show that the divisors on $\mathbb F$ are semi-ample by simply applying the base point free theorem.

 $\op{A}^{per}_g$, $\op{A}^{vor}_g$, and $\op{A}^{cen}_g$ have been the most extensively studied toroidal compactifications.
Here  $\tau=cen$ denotes the central cone decomposition which was constructed as a blowup of $\op{A}_g^{Sat}$ by Igusa in \cite{Igusa}.    In \cite{AlexeevVor}, Alexeev has shown that $\op{A}^{vor}_g$ is
the normalization of the main irreducible component of the moduli space $\ovop{AP}_g$ parametrizing isomorphism classes $(X, \theta)$ of stable semi-abelic pairs.   Shepherd-Barron has shown that for $g \ge 12$, one has that $\op{A}^{per}_g$ is the canonical model for the Satake compactification $\op{A}^{Sat}_g$ \cite{SBFirst}, and of any smooth compactification of $\op{A}_g$ (even when regarded as a stack over $\op{A}_g^{Sat}$).  Shepherd-Barron has remarked in \cite{SBFirst} that he has no reason to believe that $\op{A}^{per}_g$ is a moduli space;  and no other Toroidal compactification is known to be such.    

General relationships between the toroidal compactifications given by different fans are not fully understood.  There is a rational map 
 $g: \op{A}_g^{vor} \dashrightarrow \op{A}_g^{per}$ which is an isomorphism for $g =2$, $3$, a morphism for $g \le 5$, but not regular for $g \ge 6$ (\cite{AB}, \cite{ER1}, \cite{ER2}).  For $g=2$ and $g=3$ all three compactifications coincide, for $g=4$, one has that 
 $\op{A}_4^{per}=\op{A}_4^{cen}$, but are not equal to $\op{A}_4^{vor}$.  For $g \le 7$, that
there is toroidal extension of the Torelli map $f_{cen}:\ovop{M}_g \longrightarrow \op{A}^{cen}_g$, but that there is no such extension for $g \ge 9$ (cf. [VRG10],\cite{AB}).  Although the Picard group of $\op{A}^{vor}_4$  has been found  \cite{HulekSankaranNef} (it has 3 generators), even the number of boundary components of $\op{A}_g^{vor}$ is unknown for general $g$.  In contrast, the Picard group of $\op{A}^{per}_g$
has rank $2$, and the nef cone of $\op{A}^{per}_g$ has been explicitly described \cite{SBFirst}.   

 The result from Theorem \ref{general}, that all pullbacks of ample divisors from toroidal compactifications lie in the face $\mathbb F$, begs the question of whether, in fact, the images of $\ovop{M}_g$ under these extensions are all isomorphic. There is some evidence for this possibility already. 
Alexeev and Brunyate \cite{AB} show that the image of $\ovop{M}_g$ under $f_{vor}$ is isomorphic to its image under their extension of Torelli map 
to the perfect compactification.   One consequence of Theorem~\ref{general} gives at least a rough comparison of the images of $\ovop{M}_g$ under morphisms to them: 

\begin{corollary}\label{Pic}The image of $\ovop{M}_g$ under any extension of the Torelli map to a compactification of $\op{A}_g$ that contains Mumford's partial compactification $\op{A}_g^{part}$ and maps to Satake's compactification $\op{A}_g^{Sat}$ has Picard number $2$.
\end{corollary}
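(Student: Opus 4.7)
The plan is to factor the map as $f = i\circ\pi$, where $\pi:\ovop{M}_g\twoheadrightarrow Y := f(\ovop{M}_g)$ is the surjection onto the (reduced) image and $i:Y\hookrightarrow X$ is the closed embedding.  Because $\pi$ is surjective, the pullback $\pi^*:N^1(Y)_{\mathbb R}\to N^1(\ovop{M}_g)_{\mathbb R}$ is injective, so the Picard number of $Y$ equals the dimension of the subspace $\pi^*N^1(Y)_{\mathbb R}\subseteq N^1(\ovop{M}_g)_{\mathbb R}$.  I then aim to show this subspace is exactly $\operatorname{span}(\mathbb{F})$, which is $2$-dimensional.

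For the upper bound, I would show $\pi^*\operatorname{Nef}(Y)\subseteq \mathbb{F}$.  Fix any ample $A$ on $X$; Theorem~\ref{general} places $f^*A$ in the \emph{interior} of $\mathbb{F}$.  Since $\mathbb{F}$ is an extremal face of $\operatorname{Nef}(\ovop{M}_g)$, its dual face $\mathbb{F}^{\vee}\subseteq \overline{NE}(\ovop{M}_g)$ is generated by those effective curves $C$ with $f^*A\cdot C = A\cdot f_*C = 0$, that is, by the curves contracted by $f$; because $i$ is a closed embedding, these are exactly the curves contracted by $\pi$.  For any nef class $L$ on $Y$, the projection formula gives $\pi^*L\cdot C = L\cdot\pi_*C = 0$ for every such $C$, so $\pi^*L\in\operatorname{Nef}(\ovop{M}_g)$ annihilates all of $\mathbb{F}^{\vee}$ and therefore lies in $\mathbb{F}$.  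Since ample classes span $N^1(Y)_{\mathbb R}$, this gives $\pi^*N^1(Y)_{\mathbb R}\subseteq\operatorname{span}(\mathbb{F})$ and hence $\rho(Y)\le 2$.

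For the lower bound, I would use the assumed morphism $\eta_\tau:X\to\op{A}_g^{Sat}$.  The composition $f_\lambda = \eta_\tau\circ f$ pulls ample classes on the Satake compactification (which is $\Proj$ of the ring of Siegel modular forms) back to positive multiples of the Hodge class $\lambda$, so $\lambda\in f^*(N^1(X)_{\mathbb R})$.  On the other hand, for any ample $A$ on $X$, Theorem~\ref{general} writes $f^*A = \alpha\lambda + \beta(12\lambda - \delta_0)$ with $\alpha, \beta>0$, so subtracting $\alpha\lambda$ shows $12\lambda - \delta_0 \in f^*(N^1(X)_{\mathbb R}) = \pi^*i^*(N^1(X)_{\mathbb R})$.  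By injectivity of $\pi^*$, the image $i^*(N^1(X)_{\mathbb R})\subseteq N^1(Y)_{\mathbb R}$ has dimension at least $2$, giving $\rho(Y)\ge 2$.

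The step I expect to be most delicate is the identification of $\mathbb{F}^{\vee}$ with the cone generated by $f$-contracted curves used in the upper bound: this relies on $f^*A$ lying in the strict \emph{interior} of $\mathbb{F}$, not merely on it, which is exactly the strength of Theorem~\ref{general}.  Once that duality is in place, the remainder is routine projection-formula and dual-cone bookkeeping.
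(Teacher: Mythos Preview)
Your argument is correct. The paper states this corollary without proof, treating it as an immediate consequence of Theorem~\ref{general}, so your write-up supplies the missing details rather than duplicating an existing argument.

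One point in your upper bound deserves to be made explicit. The inference ``$\pi^*L$ is nef and annihilates $\mathbb{F}^{\vee}$, therefore $\pi^*L\in\mathbb{F}$'' requires that $\mathbb{F}^{\vee}\subset\overline{NE}(\ovop{M}_g)$ span a subspace of $N_1(\ovop{M}_g)_{\mathbb R}$ of codimension exactly~$2$; for non-polyhedral dual cones this biduality of faces is not automatic from $\mathbb{F}$ being a $2$-dimensional extremal face. Here it does hold, by Lemma~\ref{indcurves}: the $d-2$ independent curves $C^1_{3},\ldots,C^{d-2}_{3}$ all satisfy $\lambda\cdot C^i_{3}=(12\lambda-\delta_0)\cdot C^i_{3}=0$ and hence lie in $\mathbb{F}^{\vee}$, so $(\mathbb{F}^{\vee})^{\perp}=\operatorname{span}(\mathbb{F})$. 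With that in place, your duality argument and the lower bound via $\eta$ go through cleanly. The step you flagged as delicate---identifying $\mathbb{F}^{\vee}$ with the classes of $f$-contracted curves---is in fact routine once $f^*A$ is known to lie in the relative interior of $\mathbb{F}$; the genuinely non-formal input is this codimension count.
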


The face $\mathbb{F}$ is really the first region of the nef cone of $\ovop{M}_g$  to have been studied, and in fact, for $g=2$, one has that  $\mathbb{F} = \op{Nef}(\ovop{M}_2)$.   In \cite[Prop. 3.3]{FaberThesis}, Faber used \cite[Thm. 1.3]{CornalbaHarris} to establish a base case of an induction to show $12\lambda-\delta_0$ is nef for $g \ge 2$.    The divisor $\lambda$ is semi-ample -- and responsible for the morphism from $\ovop{M}_g$ to $\op{A}_g^{Sat}$.     For $g=3$, 
a cross-section of the corresponding chamber $Ch(\mathbb{F})$ of the effective cone adjacent to $\mathbb{F}$ is depicted on the left in the figure above.   The image on the right shows the chambers of the effective cone that are known for  $\ovop{M}_3$.     
For general $g \ge 2$, the chamber $Ch(\lambda)$, directly adjacent to $Ch(\mathbb{F})$, corresponds to the morphism $f_{Sat}$.   For $g \ge 3$ the chambers on the other side of $Ch(\mathbb{F})$ have been unearthed by the Hassett-Keel program (\cite{Hassettg2}, \cite{HHFlip}, \cite{HassettHyeonDivCon}, \cite{HyeonLeeHyperelliptic}, \cite{HyeonLee3}, \cite{HyeonLee2}).

  \begin{figure}
{\includegraphics[width=3cm,height=2.5cm]{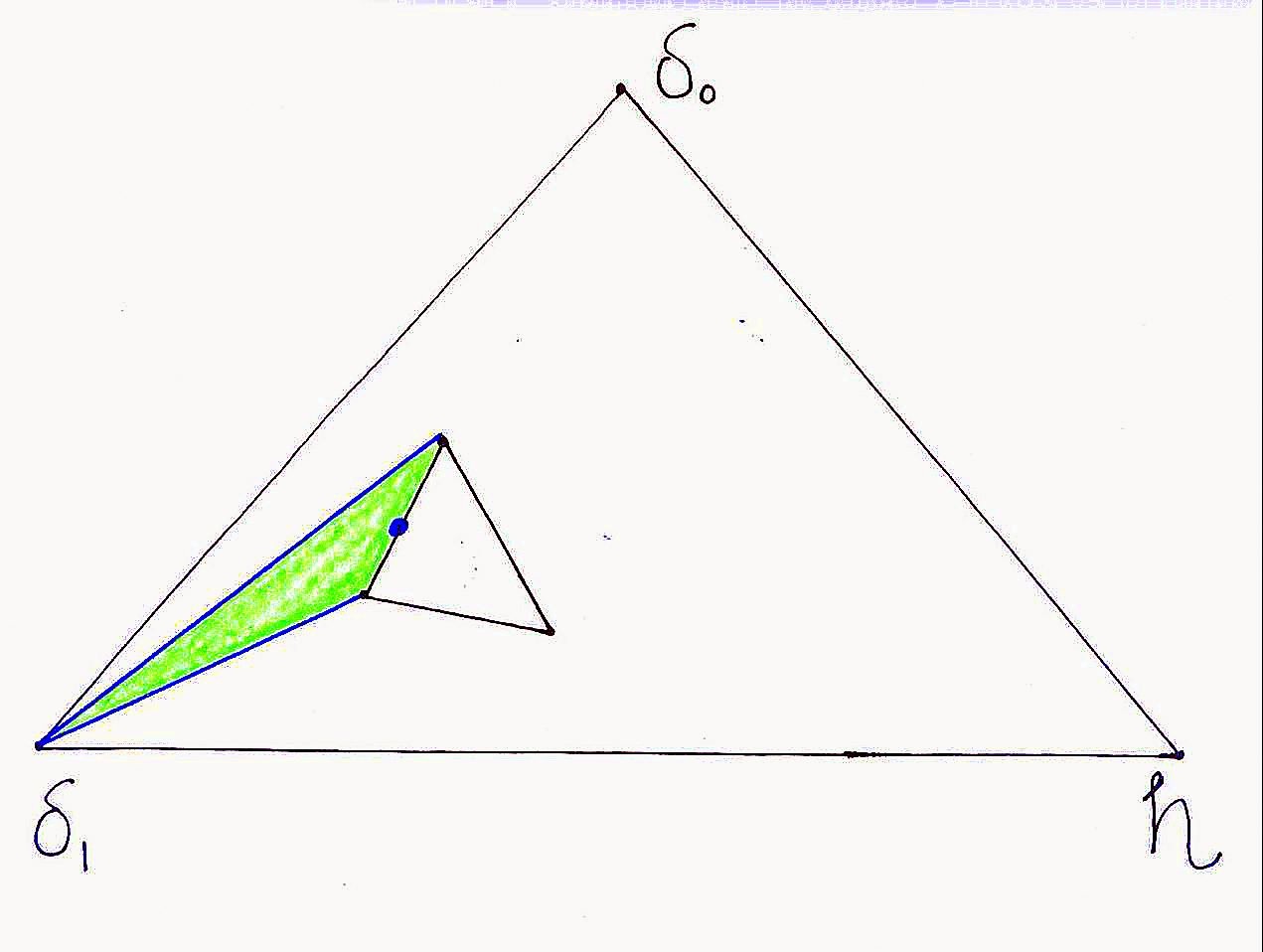}}
{\includegraphics[width=3cm,height=2.5cm]{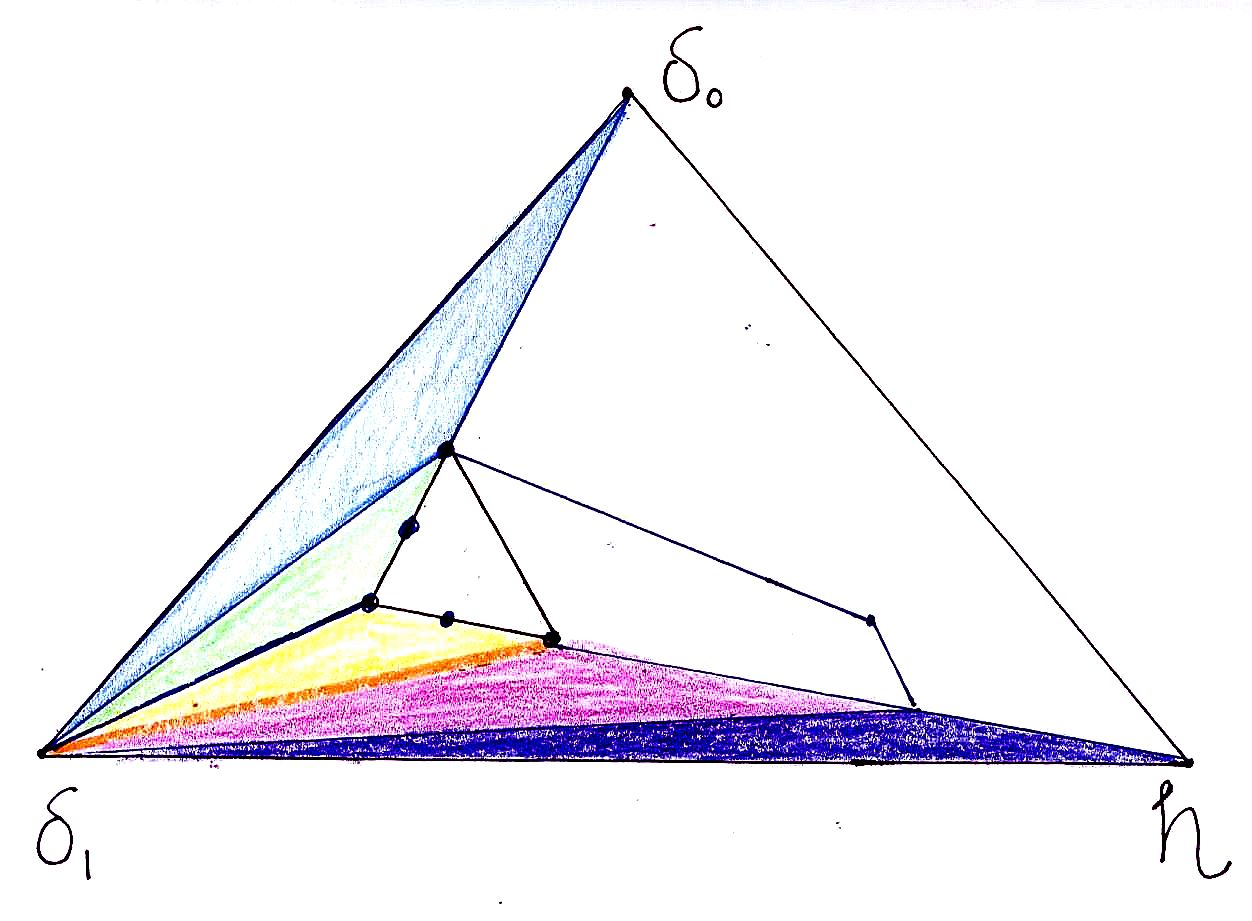}}
  \end{figure}

{\em{Layout of the paper:  }}  Background on divisors and curves on $\ovop{M}_g$ is given in Section \ref{background}.  A combinatorial proof that  $\mathbb{F}$ is a two dimensional extremal face of the nef cone of $\ovop{M}_g$ is given in Section \ref{extremal}.  The face $\mathbb{F}$ is shown to be equal to the pullback of the nef cone of $\op{A}_g^{per}$ along the newly discovered morphism $f_{per}$  in Section \ref{pullback}. Finally, 
the main result, 
Theorem \ref{general}, that any possible extension of the Torelli map is given by a divisor on $\mathbb{F}$  is proved in Section \ref{face}.

%Namikawa conjectured that $\mathfrak{t_g}$ extends to a morphism from $\ovop{M}_g$ to  $\ovmc{A}^{\tau^{cen}}_g$ and is credited with a proof of this fact for $g \le 6$  \cite{NamikawaThesis}. 
% Recently, Alexeev and Brunyate disproved Namikawa's conjecture for $g \ge 9$.   In this work, we verify this conjecture for $g\le 7$. 
 
 %\begin{theorem}\label{Namikawa7}The Torelli map extends to a morphism $$\mathfrak{t_7^{cen}}: \ovop{M}_7 \longrightarrow \ovmc{A}^{\tau^{cen}}_7.$$
  %\end{theorem}
  
 % We have written a computer program that is currently trying to verify the conjecture for $g=8$, the only case left.

\subsection*{Acknowledgements}

The idea for the project grew out of discussions in the $2010$ MRC program at Snowbird Utah.  The working group Birational Geometry of $\ovop{M}_{g}$, organized by myself, led by Maksym Fedorchuk and Noah Giansiracusa, considered the birational geometry of $\ovop{M}_3$ in great detail and Dave Jensen pointed out that the chamber adjacent to Satake chamber for $g=3$ must be correspond to the morphism from $\ovop{M}_3$ to the Voronoi compactification $\op{A}^{vor}_3$.  I lectured about these results in the first half of the Spring 2011 UGA VIGRE Algebraic Geometry student seminar and I am grateful to the participants Maxim Arap, Patrick McFaddin, Xiaoyan Hu, Lauren Huckuba, Ryan Livingstone, Jaeho Shin, and Joseph Tenini.   I would like to thank Valery Alexeev,  Jesse Kass, and Robert Varley for very helpful discussions about
abelian varieties and their moduli, and  Daniel Krashen for helpful discussions about everything else.  I thank Maxim Arap for his many helpful comments.  The pictures of the effective cone of $\ovop{M}_3$ came from Bill Rulla's thesis.

\section{Background on divisors and curves on $\ovop{M}_g$}\label{background}
  As is conventional, let $\lambda$ be the first Chern class of the Hodge bundle,  and for $0\le i \le \lfloor\frac{g}{2}\rfloor$, let $\delta_i$ be the class of the boundary component $\Delta_i$.  The divisor $\Delta_0$ is the component of the boundary of $\ovop{M}_g$ whose generic element has a single node which is nonseparating, while for $1\le i \le \lfloor\frac{g}{2}\rfloor$, the divisor $\Delta_i$ is the component of the boundary of $\ovop{M}_g$ whose generic element has a single node separating the curve into a component of genus $i$ and a component of genus $g-i$.   For $g \ge 3$, the divisor classes in the set $\{\lambda\} \cup \{\delta_i: 0\le i \le \lfloor\frac{g}{2}\rfloor \}$ form a basis for $\operatorname{Pic}(\ovop{M}_{g})$.

\begin{definition}\label{FCurves}  We will next  recall the definition of the six types of curves on $\ovop{M}_g$, which in the literature are called $F$-curves. The first curve $C_1$ is a family of elliptic tails.  The remaining five types
are obtained by attaching curves to a fixed $4$-pointed stable curve $B$ of genus zero with one point moving and the other three fixed.  The curves to attach are described as follows:

\bigskip

\renewcommand{\arraystretch}{1.2}
\begin{tabular}{lllll}
$C$ & curves to attach to $B$ to obtain $C$\\
\hline
\hline
$C_2$ & a $4$-ptd curve of genus  $g-3$\\ 
\hline
$C^i_{3}$ &  a $1$-ptd curve of genus $i$ and a  $3$-ptd curve of genus $g-2-i$ \\ 
\hline
$C^i_{4}$ &  $2$-ptd curves of genus $i$ and of genus $g-2-i$ \\ 
\hline
$C^{ij}_{5}$ &  $1$-ptd curves of genus $i$ and $j$, a $2$-ptd curve of genus $g-1-i-j$\\ 
\hline
$C^{ijk}_{6}=F_{i,j,k,\ell}$ &   $1$-ptd curves of genus $i$, $j$, $k$, and $\ell=g-(i+j+k)$  \\ 
\hline
\end{tabular}
\renewcommand{\arraystretch}{1}

\bigskip
\noindent In the table above, to define $C^i_{3}$, take $1 \le i \le g-2$; to define $C^i_{4}$, take  $0 \le i \le g-2$; to define $C^{ij}_{5}$, 
take $i$ and $j \ge 1$, such that  $i+j \le g-1$; and to define $C^{ijk}_{6}=F_{i,j,k,\ell}$, take $i$,$j$,$k$,$\ell \ge 1$.  When attaching a $2$-pointed curve of genus zero, replace the resulting rational bridge with a node.

\end{definition}

The table below shows the intersection number for an arbitrary divisor $D=a\lambda-\sum_{i=0}^{\lfloor \frac{g}{2} \rfloor}b_i \delta_i$ with 
the $F$-curves, and the degree of intersection for the divisors $\lambda$ and $12\lambda - \delta_0$, which are the generators of the face $\mathbb{F}$.  

\bigskip
\renewcommand{\arraystretch}{1.2}
\begin{tabular}{lllllll}\label{FInequalities}

$C$ & $ D\cdot C= (a\lambda-\sum_{i=0}^{\lfloor \frac{g}{2} \rfloor}b_i \delta_i) \cdot C$ & $\lambda \cdot C$ & $(12\lambda - \delta_0) \cdot C$ \\ 
\hline
\hline
$C_1$ & $\frac{a}{12}-b_0 + \frac{b_1}{12}$ & $\frac{1}{12}$ & $0$ \\ 
\hline
$C_2$ & $b_0$ & $0$ & $1$ \\ 
\hline
$C^i_{3}$ & $b_i$ & $0$ & $0$ \\ 
\hline
$C^i_{4}$ & $2b_0-b_{i+1}$ & $0$ & $2$ \\ 
\hline
$C^{ij}_{5}$ & $b_i+b_j-b_{i+j}$ & $0$ & $0$ \\ 
\hline
$C^{ijk}_{6}=F_{i,j,k,\ell}$ & $b_i+b_j+b_k+b_{\ell}-b_{i+j}-b_{i+k}-b_{i+\ell}$ & $0$ & $0$ \\ 
\hline

\end{tabular}

%\begin{definition}\label{Fdivisor}A divisor $D=a\lambda-\sum_{i=0}^{\lfloor \frac{g}{2} \rfloor}b_i \delta_i$ is an $F$-divisor if it nonnegatively intersects
%all $F$-curves. In other words, if the coefficients  satisfy all inequalities in the second column of 
%Table \ref{FInequalities}.
%\end{definition}

\section{An extremal face of the nef cone of $\ovop{M}_g$}\label{extremal}

\begin{proposition}\label{extremalrays}
The divisors $\lambda$ and $12\lambda-\delta_0$
generate independent extremal rays of $\operatorname{Nef}(\ovop{M}_g)$.  The set of divisors 
$$\mathbb{F}=\{\alpha \lambda + \beta (12\lambda - \delta_0): \alpha, \beta \ge 0 \},$$
is a $2$-dimensional extremal face of $\operatorname{Nef}(\ovop{M}_g)$.
\end{proposition}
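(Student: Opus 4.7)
The plan is to verify that $\mathbb{F}$ lies inside $\operatorname{Nef}(\ovop{M}_g)$, identify a family of $F$-curves whose common orthogonal nef divisors are exactly $\mathbb{F}$, and then specialize that family to isolate the two bounding rays.

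First, convexity of the nef cone reduces the inclusion $\mathbb{F} \subseteq \operatorname{Nef}(\ovop{M}_g)$ to the known nefness of the two generators: $\lambda$ is semi-ample since it defines $f_\lambda : \ovop{M}_g \to \ovop{A}_g^{Sat}$, and $12\lambda-\delta_0$ is the Faber divisor, proved nef in \cite{FaberThesis} via Cornalba--Harris. So every nonnegative combination is nef.

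To see that $\mathbb{F}$ is a face, I would inspect the intersection table in Section \ref{background}. Both generators pair to zero with every $F$-curve of type $C^i_{3}$, $C^{ij}_{5}$, and $C^{ijk}_{6}$, so by linearity every $D \in \mathbb{F}$ does as well. For the reverse inclusion, let $D = a\lambda - \sum_i b_i\delta_i$ be any nef divisor orthogonal to each of these curves. The relation $D \cdot C^i_3 = b_i$ forces $b_i = 0$ for $1 \le i \le g-2$, which after identifying $\delta_i$ with $\delta_{g-i}$ kills every $b_i$ with $i \ge 1$; the $C_5$ and $C_6$ conditions then become trivial. Nefness of $D = a\lambda - b_0\delta_0$ against $C_1$ and $C_2$ next gives $a \ge 12 b_0$ and $b_0 \ge 0$, so
\[
D = (a - 12 b_0)\lambda + b_0(12\lambda - \delta_0)
\]
displays $D$ as an element of $\mathbb{F}$. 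This shows $\mathbb{F}$ is a face of $\operatorname{Nef}(\ovop{M}_g)$.

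Finally, extremality of the individual rays follows by enlarging the cutting-out family: appending $C_2$ additionally forces $b_0 = 0$ and isolates $\mathbb{R}_{\ge 0}\lambda$; appending $C_1$ instead forces $a = 12b_0$ and isolates $\mathbb{R}_{\ge 0}(12\lambda - \delta_0)$. Linear independence of the two rays is visible from their coefficients. The only real check is the combinatorial observation that the $C^i_3$ alone exhaust the boundary divisors $\delta_i$ with $i \ge 1$; once that is in hand, the rest reduces to a short linear-algebra computation against the tabulated $F$-curves. The low-genus case $g=2$, where no $C^i_3$ exists and $\operatorname{Pic}(\ovop{M}_2)$ has rank $2$ by Mumford's relation, requires a separate direct check using $C_1$, $C_2$, and $C^0_4$, and there $\mathbb{F}$ is already the full nef cone.
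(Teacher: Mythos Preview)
Your proof is correct and follows essentially the same approach as the paper: both use the $F$-curves $C^i_3$ to force $b_i=0$ for $i\ge 1$, and then $C_1$, $C_2$ to isolate the two extremal rays. The paper packages this via a dual-basis/independence lemma (Lemma~\ref{indcurves}) rather than your supporting-hyperplane language, and it handles $g=2$ uniformly through the Picard rank $d$, whereas you treat it separately and include the redundant families $C^{ij}_5$, $C^{ijk}_6$; but these are cosmetic differences.
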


To prove Proposition \ref{extremalrays}, we show that a particular set of $F$-curves is independent.
\begin{lemma}\label{indcurves}Let $d$ be the dimension of the $NS(\ovop{M}_g)$. The set  $S=\{C_1, C_2\} \cup \{C^i_{3}: 1 \le i \le d-2\}$ consists of $d$ independent curves.
\end{lemma}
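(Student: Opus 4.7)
The plan is to compute the matrix of intersection numbers of the curves in $S$ against a basis of $\op{Pic}(\ovop{M}_g)_{\mathbb{Q}}$ and observe that it is triangular with nonzero diagonal, from which linear independence in $N_1(\ovop{M}_g)_{\mathbb{Q}}$ follows.

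First I would recall (for $g \ge 3$) that $\op{Pic}(\ovop{M}_g)_{\mathbb{Q}}$ is freely generated by $\lambda, \delta_0, \delta_1, \ldots, \delta_{\lfloor g/2 \rfloor}$, so that $d = \lfloor g/2 \rfloor + 2$ and $S$ consists precisely of the $d$ curves $C_1,\ C_2,\ C_3^1,\ C_3^2,\ \ldots,\ C_3^{\lfloor g/2 \rfloor}$; the case $g=2$ has $d = 2$ and $S = \{C_1,C_2\}$, and is handled in exactly the same way.

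Next I would simply read off the pairings from the intersection table in Section \ref{background} by specializing the general formulas to the basis divisors $\lambda,\delta_0,\delta_1,\ldots,\delta_{\lfloor g/2 \rfloor}$. This yields
\[
C_1 \cdot \lambda = \tfrac{1}{12},\quad C_1 \cdot \delta_0 = 1,\quad C_1 \cdot \delta_1 = -\tfrac{1}{12},
\]
with all other intersections of $C_1$ against the basis equal to zero; $C_2 \cdot \delta_0 = -1$ and all other intersections of $C_2$ zero; and for each $1 \le i \le \lfloor g/2 \rfloor$, $C_3^i \cdot \delta_i = -1$ with all other intersections zero.

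I would then assemble these numbers into the $d \times d$ matrix $M$ whose rows are indexed by the curves in the order $C_1, C_2, C_3^1, C_3^2, \ldots, C_3^{\lfloor g/2 \rfloor}$ and whose columns are indexed by the divisors in the order $\lambda, \delta_0, \delta_1, \ldots, \delta_{\lfloor g/2 \rfloor}$. Inspection shows $M$ is upper triangular with diagonal entries $\bigl(\tfrac{1}{12}, -1, -1, \ldots, -1\bigr)$, so $\det M \ne 0$. Consequently the linear functionals on $\op{Pic}(\ovop{M}_g)_{\mathbb{Q}}$ given by intersection with the curves in $S$ are linearly independent, which forces the classes of the curves themselves to be linearly independent in $N_1(\ovop{M}_g)_{\mathbb{Q}}$.

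There is really no obstacle here beyond careful bookkeeping: the crux is just ordering the basis of $\op{Pic}$ and the curves of $S$ in parallel so that the triangularity becomes visible. The only thing to keep in mind is the sign convention $D = a\lambda - \sum b_i \delta_i$ used in the table, which is why the diagonal entries corresponding to the boundary curves come out negative rather than positive.
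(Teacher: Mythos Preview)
Your argument is correct and follows essentially the same approach as the paper: both compute the intersection matrix of the curves in $S$ against a basis of $\operatorname{Pic}(\ovop{M}_g)_{\mathbb{Q}}$ and observe it is invertible. The only cosmetic difference is that the paper changes basis to $12\lambda,\ 12\lambda-\delta_0,\ \delta_1,\ldots,\delta_{d-2}$ so that the matrix becomes the identity, whereas you keep the standard basis and obtain an upper triangular matrix with nonzero diagonal.
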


\begin{proof}(of Lemma \ref{indcurves})
To show that $S$ is independent, using Table \ref{FInequalities}, we form the matrix of intersection numbers of these curves with the basis $12\lambda$, $12\lambda-\delta_0$, $\delta_1$, $\ldots$, $\delta_{d-2}$ for $\operatorname{Pic}(\ovop{M}_g)$.  Since this is the identity matrix, it shows that the curves in $S$ form a basis for the $1$ cycles of $\ovop{M}_g$.  
\[
\begin{tabular}{c | c c c c c c }
 & $C_1$ & $C_2$ & $C^1_{3}$ & $C^2_{3}$ & $\cdots$ & $C^{d-2}_{3}$ \\
\hline
$12 \lambda$ & $1$ & $0$ & $0$ & $0$ & $\cdots$ & $0$ \\
$12 \lambda - \delta_0$ & $0$ & $1$ & $0$ & $0$ & $\cdots$ & $0$ \\
$\delta_1$ & $0$ & $0$ & $1$ & $0$ & $\cdots$ & $0$ \\
$\delta_2$ & $0$ & $0$ & $0$ & $1$ & $\cdots$ & $0$ \\
\vdots & \vdots & \vdots & \vdots & \vdots & $\ddots$ \\
$\delta_{d-2}$ & $0$ & $0$ & $0$ & $0$ & $\cdots$ & $1$ \\
\end{tabular}
\]

\end{proof}

\begin{proof}(of Propositon \ref{extremalrays})
By \cite{MumfordEnumerative}, and \cite[Prop3.3]{FaberThesis} the divisors $\lambda$, and  $12\lambda - \delta_0$ are nef.
Let $S=\{C_1, C_2\} \cup \{ C^1_{3}, \ldots, C^{d-2}_{3}\}$. Using Table \ref{FInequalities}, one can see that the intersection numbers with the curves defined in Lemma \ref{indcurves} are as follows:
\begin{enumerate}
\item $\lambda \cdot C_1 \ne 0$,  and $\lambda \cdot C = 0$, for all $C \in S \setminus C_1$; 
\item $(12\lambda -\delta_0)\cdot C_2 \ne 0$,  and $(12\lambda -\delta_0) \cdot C = 0$, for all $C \in S \setminus C_2$; 
\item  For $\epsilon >0$,  
$(\epsilon \lambda + 12\lambda - \delta_0) \cdot C_1 \ne 0$, $(\epsilon \lambda + 12\lambda - \delta_0) \cdot C_2 \ne 0$,  and $(\epsilon \lambda + 12\lambda - \delta_0) \cdot C = 0$, for all $C \in S \setminus \{C_1,C_2\}$; 
\end{enumerate}
Since by Lemma \ref{indcurves}, all of the curves are independent, it follows that $\lambda$ and $12\lambda-\delta_0$ generate extremal rays of the nef cone of $\ovop{M}_g$ while positive
 linear combinations of them lie on the interior of a 2-dimensional extremal face.
\end{proof}

We shall see in Section \ref{pullback} that all elements on the face of divisors ${\mathbb{F}}$ are pullbacks of nef divisors along the morphism $f_{per}: \ovop{M}_g \longrightarrow \op{A}_g^{per}$, and in particular, the divisors on the interior of the face are semi-ample.
In this section we discuss the fact that we do not know of another way to show that any element of that face is semi-ample.  In particular
we make the following observation.
 
\begin{remark}\label{bpf}
One cannot trivially use the basepoint-free Theorem to prove any divisor  $D_{\alpha \beta}=\alpha \lambda + \beta (12 \lambda - \delta_0)$, for $\alpha$, and $\beta \ge 0$ is semi-ample. Indeed, recall that the basepoint-free Theorem \cite{KollarMori}*{Thm. 3.3}  says that if $(X,\Delta)$ is a proper klt pair with $\Delta$ effective, and if $D$ is a nef Cartier divisor such that
$mD-K_X-\Delta$ is nef and big for some $m >0$, then $|kD|$ has no basepoints for all $k >>0$.  
By  \cite{BCHM}*{Lemma 10.1}, the pair $(\ovop{M}_{g}, \Delta)$, with
 $\Delta=\sum_{i=0}^{\lfloor \frac{g}{2} \rfloor}\delta_i$ on $\ovop{M}_{g}$ is klt. 
  The canonical divisor of $\ovop{M}_g$ can be expressed as
 $$K_{\ovop{M}_{g}}=13\lambda - 2 \sum_{i=0}^{\lfloor \frac{g}{2} \rfloor}\delta_i.$$
\noindent
It is easy to see, no matter what choice of nonnegative $\alpha$, and $\beta$,  that one can't find $m$ so that 
$mD_{\alpha \beta} - (K_{\ovop{M}_g}+\Delta)$ is nef.  Indeed, 
$$ mD_{\alpha \beta}- (K_{\ovop{M}_g}+\Delta) 
 =(m (\alpha + 12 \beta)-13) \lambda - (m \beta -1)\delta_0 - \sum_{i=1}^{\lfloor \frac{g}{2} \rfloor}(-1)\delta_{i}.$$
\noindent
If this divisor were nef, then it would be an $F$-divisor, which is a divisor that nonnegatively intersects all $F$-curves on $\ovop{M}_g$ .  However, it intersects the curves $C^i_{3}$ in degree $-1$.

\end{remark}

\section{The pullback of the nef cone of $\op{A}_g^{per}$}\label{pullback}
In this section we show that the two dimensional face $\mathbb{F}$ spanned by the extremal divisors $\lambda$ and $12\lambda-\delta_0$ is equal to the pullback of the nef cone of $\op{A}_g^{per}$ along the morphism $f_{per}: \ovop{M}_g \longrightarrow \op{A}_g^{per}$.  This shows that in particular, the divisors on the interior of the face $\mathbb{F}$ are semi-ample.    As $\lambda$ is the pullback of the ample divisor $M$ of weight $1$ modular forms on $\op{A}_g^{Sat}$ along
the morphism  $f_{Sat}: \ovop{M}_g \longrightarrow \op{A}_g^{Sat}$, one has that $\lambda$ is semi-ample.  Ideally one would like to know that $12\lambda- \delta_0$ is also semi-ample.  Using Shepherd-Barron's results about semi-amplenss of the extremal divisor $12M-D_g^{per}$ for low $g$, we show that for $g \le 11$, as long as $\ovop{M}_g$ is defined over $\mathbb{C}$, then this is true.

 We will use the following to prove Proposition \ref{nefpullback} as well as to prove Theorem \ref{general}.
 
 \begin{lemma}\label{keylemma}
For $j:\op{M}_g^{\star}\hookrightarrow \ovop{M}_g$ the embedding of the moduli space $\op{M}_g^{\star}$ of stable curves of genus $g$ of compact type,
one has that $$Ker(j^*: \op{Pic}(\ovop{M}_g)_{\mathbb{Q}}\longrightarrow\op{Pic}(\op{M}_g^{\star})_{\mathbb{Q}}) \cong \mathbb{Q}\delta_0.$$
 \end{lemma}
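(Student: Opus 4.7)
The proof reduces to a straightforward application of excision once the open subset $\op{M}_g^{\star} \subset \ovop{M}_g$ is correctly identified.

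The first step is to recognize that $\op{M}_g^{\star} = \ovop{M}_g \setminus \Delta_0$. This follows from the standard characterization: a stable curve $C$ is of compact type if and only if its generalized Jacobian is an abelian variety, equivalently its dual graph is a tree, equivalently every node of $C$ is separating. But $\Delta_0$ is, by definition, the boundary component whose generic element has a non-separating node, so the locus of compact-type curves is exactly the complement of $\Delta_0$. Thus $j: \op{M}_g^{\star} \hookrightarrow \ovop{M}_g$ is the open embedding given by removing a single irreducible Cartier divisor.

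The second step is to invoke the standard excision exact sequence for line bundles on the open complement of an effective Cartier divisor on a smooth Deligne--Mumford stack:
\[
\mathbb{Z}\cdot[\Delta_0] \;\xrightarrow{\ \phi\ }\; \op{Pic}(\ovop{M}_g) \;\xrightarrow{\ j^*\ }\; \op{Pic}(\op{M}_g^{\star}) \;\longrightarrow\; 0,
\]
where $\phi$ sends $[\Delta_0]$ to its class $\delta_0 \in \op{Pic}(\ovop{M}_g)$. Tensoring with $\mathbb{Q}$, the kernel of $j^*_{\mathbb{Q}}$ is the image of $\phi \otimes \mathbb{Q}$, which is the cyclic $\mathbb{Q}$-subspace spanned by $\delta_0$. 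Since $\delta_0$ is non-torsion in $\op{Pic}(\ovop{M}_g)_{\mathbb{Q}}$ (it is part of the basis recalled in Section~\ref{background} for $g\ge 3$, and is well known to be nonzero for $g=2$), this image is exactly $\mathbb{Q}\delta_0$, which gives the claim.

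The main obstacle is minor: one need only make sure excision applies as stated. The cleanest route is to work on the smooth DM stack $\ovop{M}_g$, where the sequence above is standard; alternatively, one works on the coarse moduli space, which is $\mathbb{Q}$-factorial, so that Weil and Cartier divisor classes agree after tensoring with $\mathbb{Q}$ and the excision argument goes through rationally. Either way, no genuine new input is needed beyond the identification $\op{M}_g^{\star} = \ovop{M}_g \setminus \Delta_0$ and the fact that $\Delta_0$ is irreducible.
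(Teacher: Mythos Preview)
Your proposal is correct and follows essentially the same route as the paper: identify $\op{M}_g^{\star}$ with $\ovop{M}_g\setminus\Delta_0$, apply the excision exact sequence for $A^1$ (equivalently $\op{Pic}$), and tensor with $\mathbb{Q}$. If anything, your write-up is slightly more careful than the paper's, since you explicitly justify the identification $\op{M}_g^{\star}=\ovop{M}_g\setminus\Delta_0$ and check that $\delta_0$ is nonzero in $\op{Pic}(\ovop{M}_g)_{\mathbb{Q}}$, so that the kernel is exactly $\mathbb{Q}\delta_0$ rather than merely contained in it.
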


 \begin{proof}
 Let $X$ be any compactification of $\op{A}_g$ for which there is a morphism $\eta: X \longrightarrow \op{A}_g^{Sat}$,
and such that $\eta^{-1}(\op{A}_g \cup A_{g-1})=\op{A}_g^{part}=\op{A}_g \cup {D}_g$ is Mumford's partial compactification of $\op{A}_g$. 
Suppose that $f: \ovop{M}_g \longrightarrow X$ any morphism that extends the Torelli map.    Note that $f_{Sat}=\eta \circ f$, and $\op{M}_g^{\star}=f_{sat}^{-1}(\op{A}_g)$.  We have the following commutative diagram.

\[\xymatrix @C=2cm @R=1cm {
& & X \ar[rd]^{\eta} \\
	\ovop{M}_g \ar[rru]^{f}  & 
	\op{M}_g^{\star} \ar@{_{(}->}[l]_{ \ \ \  j} \ar[r]^{\phi} \ar[ru]  & 
	\op{A}_g^{part} \ar[r] \ar@{^{(}->}[u]_{i}  & 
	\op{A}_g^{Sat}=\op{A}_g \cup {A}_{g-1} \cup \cdots \cup {A}_{0}.
}\]

\noindent
 The inclusion 
$$\ovop{M}_g \setminus \Delta_0 \stackrel{j}{\hookrightarrow} \op{M}_g^{\star},$$ 
induces an isomorphism 
$$A^1(\op{M}_g^{\star}) \cong A^1(\ovop{M}_g \setminus \Delta_0),$$
giving the following right exact sequence
$$\mathbb{Z} \delta_0 \longrightarrow A^1(\ovop{M}_g) \longrightarrow A^1(\op{M}_g^{\star}) \cong A^1(\ovop{M}_g \setminus \Delta_0) \longrightarrow 0.$$
Tensoring with $\mathbb{Q}$ is exact and so we get the right exact sequence
$$\mathbb{Q} \delta_0 \longrightarrow \operatorname{Pic}(\ovop{M}_g)_{\mathbb{Q}} \overset{j^*}{\longrightarrow}  \operatorname{Pic}(\op{M}_g^{\star})_{\mathbb{Q}}  \longrightarrow 0.$$
In particular, elements in the kernel of $j^*$ are equivalent to rational multiples of $\delta_0$.  

 \end{proof}

\begin{proposition}\label{nefpullback}Let $f_{per}: \ovop{M}_g \longrightarrow \op{A}_g^{per}$ be the Toroidal extension of the Torelli map.  Then
$$f_{per}^*(\operatorname{Nef}(\op{A}_g^{per}))={\mathbb{F}}$$
 
 \end{proposition}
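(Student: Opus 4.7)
The strategy is to compute the pullback $f_{per}^*$ on the two extremal rays of $\operatorname{Nef}(\op{A}_g^{per})$. By Shepherd-Barron \cite{SBFirst}, $\op{Pic}(\op{A}_g^{per})_{\mathbb{Q}}$ has rank two, generated by the weight-one modular form class $M$ and the unique boundary divisor $D_g^{per}$, and $\operatorname{Nef}(\op{A}_g^{per})$ is generated by the two extremal rays $M$ and $12M - D_g^{per}$. Hence it will suffice to establish the two identities $f_{per}^*(M) = \lambda$ and $f_{per}^*(D_g^{per}) = \delta_0$.

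The first identity is essentially classical: since $\eta \circ f_{per} = f_{Sat}$ and $M$ is the pullback of the ample generator on $\op{A}_g^{Sat}$, we have $f_{per}^*(M) = f_{Sat}^*(M) = \lambda$.

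For the second identity I would restrict to the open subvariety $\op{M}_g^{\star} \subset \ovop{M}_g$ of compact-type curves. Because the Jacobian of a compact-type curve is a genuine abelian variety, the restriction $f_{per}|_{\op{M}_g^{\star}}$ factors as $\op{M}_g^{\star} \xrightarrow{\phi} \op{A}_g^{part} \hookrightarrow \op{A}_g^{per}$ with image landing entirely in $\op{A}_g \subset \op{A}_g^{part}$, disjoint from the boundary divisor $D_g$. Since $\op{A}_g^{part}$ is open in $\op{A}_g^{per}$ with $D_g^{per}|_{\op{A}_g^{part}} = D_g$, this gives $j^* f_{per}^*(D_g^{per}) = \phi^*(D_g) = 0$. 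Lemma \ref{keylemma} then forces $f_{per}^*(D_g^{per}) = c\,\delta_0$ for some $c \in \mathbb{Q}$. To pin down $c = 1$, I would analyze the extension at the generic point of $\Delta_0$: a generic irreducible one-nodal curve has rank-one semi-abelic Jacobian lying in the smooth locus of $D_g \subset \op{A}_g^{part}$, and the construction of the Torelli extension via Mumford's partial compactification (as used in \cite{AB} to produce $f_{per}$) shows that the scheme-theoretic pullback of $D_g$ matches the generic stratum of $\Delta_0$ with multiplicity one.

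Assembling the two computations, $f_{per}^*(12M - D_g^{per}) = 12\lambda - \delta_0$, so $f_{per}^*$ carries the two extremal rays of $\operatorname{Nef}(\op{A}_g^{per})$ precisely to the two extremal rays $\lambda$ and $12\lambda - \delta_0$ of $\mathbb{F}$, yielding the desired equality $f_{per}^*(\operatorname{Nef}(\op{A}_g^{per})) = \mathbb{F}$. The main obstacle is the verification that the multiplicity $c$ equals one; this is the only step that genuinely depends on the local structure of $f_{per}$ along $\Delta_0$, and is precisely where invoking the explicit construction of $f_{per}$ from \cite{AB} (which in turn relies on the Mumford--Namikawa local model for the Torelli extension) becomes essential.
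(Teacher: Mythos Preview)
Your overall architecture matches the paper's proof: reduce to computing $f_{per}^*$ on the two extremal rays $M$ and $12M-D_g^{per}$ of $\operatorname{Nef}(\op{A}_g^{per})$ via Shepherd-Barron's description, obtain $f_{per}^*(M)=\lambda$ from the Satake factorization, and use Lemma~\ref{keylemma} to conclude $f_{per}^*(D_g^{per})=c\,\delta_0$ for some rational $c$. The only substantive divergence is in how $c=1$ is pinned down.

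You propose a local analysis at the generic point of $\Delta_0$, checking that the scheme-theoretic pullback of $D_g$ has multiplicity one there by appealing to the explicit construction of $f_{per}$ in \cite{AB}. This is legitimate, and you are right that it is the crux. The paper instead settles $c=1$ by a purely intersection-theoretic trick that avoids any local computation: Shepherd-Barron exhibits a curve $C(1)\subset\op{A}_g^{per}$ (closure of $\{B\times E\}$ with $B$ fixed and $E$ a varying elliptic curve) satisfying $C(1)\cdot(12M-D_g^{per})=0$, and observes that $C(1)$ is exactly the $f_{per}$-image of the elliptic-tail $F$-curve $C_1$. The projection formula together with the known numbers $C_1\cdot\lambda\neq 0$ and $C_1\cdot(12\lambda-\delta_0)=0$ then forces first $c\neq 0$ and then $12M-\tfrac{1}{c}D_g^{per}$ to lie on the line $\ker(C(1)\cdot-)$, hence $c=1$. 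Your route is more geometric and arguably more conceptual; the paper's route is cleaner in that it uses only numerical data already tabulated (Table~\ref{FInequalities} and \cite{SBFirst}) and never opens up the construction of $f_{per}$.
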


\begin{proof}
There is a morphism $\eta_{per}: \op{A}_g^{per} \longrightarrow \op{A}_g^{Sat}$, and that 
$\op{A}_g^{per}$ contains $\eta_{per}^{-1}(\op{A}_g \cup \op{A}_{g-1}) = \op{A}_g^{part}= \op{A}_g \cup D_g$.   By \cite[Theorem $0.1$]{SBFirst}, $$\operatorname{Nef}(\op{A}_g^{per})=\{aM - b D_g^{per}: a \ge 12 b \ge 0\},$$
where $M=\eta_{per}^*(M)$,  $M$ is the ample generator of $\operatorname{Pic}(\op{A}_g^{Sat})_{\mathbb{Q}}$, and $D_g^{per}$ is the closure of $D_g$ in $\op{A}_g^{per}$.
In particular, $12M - D^{per}_g$ and $M$ generate the nef cone of $\op{A}_g^{per}$.
The result will therefore follow after we show that $f_{per}^*(M) = \lambda$ and that $f_{per}^*(12 M - D^{per}_g) = 12 \lambda - \delta_0$.

Consider the following commutative diagram.

\[\xymatrix @C=2cm @R=1cm {
& & \op{A}_g^{per} \ar[rd]^{\eta_{per}} \\
	\ovop{M}_g \ar[rru]^{f_{per}}  & 
	\op{M}_g^* \ar@{_{(}->}[l]_{ \ \ \  j} \ar[r]^{\phi} \ar[ru]  & 
	\op{A}_g^{part} = {A}_{g} \cup D_g \ar[r] \ar@{^{(}->}[u]_{i}  & 
	\op{A}_g^{Sat}.
}\]

Recall that $f_{Sat}^{-1}(\op{A}_g) =\op{M}_g^{\star}$ is the moduli space of genus $g$ stable curves of compact type  \cite{MumfordGIT}.  
 We denote by $M$ the pull back of $M$ to the two varieties $\op{M}_g^{\star}$, and $\op{A}_g^{part}$.  By Lemma \ref{keylemma}, any element of the kernel of $j^*$ is equivalent to a rational multiple of $\delta_0$.
In particular, if 
$\lambda$, $\delta_0$, $B_3$, $\ldots$, $B_d$ is any basis for $\operatorname{Pic}(\ovop{M}_g)_{\mathbb{Q}}$, then $j^*\lambda$, $j^*B_3$, $\ldots$, $j^*B_d$ is a basis for $\operatorname{Pic}(\op{M}_{g}^{\star})_{\mathbb{Q}}$.

Now we are in the position to prove our assertion.
Let $\mathcal{D}$ be any nef divisor on $\op{A}_g^{per}$, and let  $\{\lambda, \delta_0, B_3, \ldots, B_d\}$ is any basis for $\operatorname{Pic}(\ovop{M}_g)_{\mathbb{Q}}$.
 Then $f_{\tau}^*(\mathcal{D})=a\lambda - b \delta_0 - \sum_{i=3}^d b_i B_i$, and $j^*(f^*\mathcal{D})=aj^*\lambda -  \sum_{i=3}^d b_i j^*B_i$.
We will first show that the $b_i=0$ for $i \ge 3$.  To do this we'll see that $\sum_{i=3}^d b_i j^*B_i \in ker(j^*)$.

Since $\mathcal{D}$ is nef on $\op{A}_g^{per}$,  we may write $\mathcal{D}=\alpha L - \beta D^{per}_g$ such that $\alpha \ge 12 \beta \ge 0$.  
We also have that $i^*(\mathcal{D})=\alpha L - \beta D_g$.  On the other hand, 
  $\phi^*{D}_g=0$, since $\op{M}_g^{\star}=f_{Sat}^{-1}(\op{A}_g)$.
By commutativity of the diagram, it follows therefore that  $j^*f^*(\mathcal{D}) = \phi^*(i^*(\mathcal{D}))= \alpha L.$

Comparing the coefficients of the basis elements:
$$aj^*\lambda -  \sum_{i=3}^d b_i j^*B_i=aL-  \sum_{i=3}^d b_i j^*B_i=\alpha L.$$
And so we see that  $a=\alpha$ and $b_i=0$, for $3 \le i \le d$.  In particular, we have shown that $f^*(L)= \lambda$ and that $f^*(D^{per}_g)=c\delta_0$, for some $c \in \mathbb{Q}$.  We will show that $c$ has to be $1$.

In \cite{SBFirst}, Shepherd-Barron has shown that the Mori Cone of curves of $\op{A}_g^{per}$ is generated by curves 
 $C(1)$ and $C(2)$,
where $C(1)$ is the closure of the set of points $B \times E$ where $B$ is a fixed principally polarized
abelian $(g-1)$-fold and $E$ is a variable elliptic curve, and $C(2)$ is any exceptional
curve of the contraction $\eta_{per}: \op{A}_g^{per} \longrightarrow \op{A}_g^{Sat}$.  
The image of the $\op{F}$-curve $C_1$ on $\ovop{M}_g$, defined in Definiton \ref{FCurves}, under $f_{Sat}$ is the same as the image of $C(1)$ under the map $\eta_{per}$
which gives that $C(1)$ is the image under $f_{per}$ of the F-Curve $C_1$.

Shepherd-Barron showed that $C(1) \cdot (12 L - D_g^{per}) =0$ and we have shown that $C_1  \cdot \lambda \ne 0$.
If $c=0$, then by the projection formula
\begin{multline*}
0=C(1) \cdot (12 L - c D^{per}_g) =(f_{per})_*(C_1) \cdot (12 L - c D_g)\\
=(f_{per})_*(C_1 \cdot f_{per}^*(12 L-cD_g)) =(f_{per})_*(C_1 \cdot f_{per}^*(12 L))=  (f_{per})_*(C_1 \cdot 12 \lambda) \ne 0,
\end{multline*}
\noindent
which is clearly a contradiction.

Therefore, assuming $c\ne 0$, we consider the divisor 
$f_{per}^*(12 L - \frac{1}{c} D_g^{per})= 12 \lambda - \delta_{0}$.
$$0 = C_1 \cdot (12 \lambda - \delta_0 )= C_1 \cdot f_{per}^*(12 L - \frac{1}{c}D^{per}_g) =(f_{per})_*(C_1) \cdot (12 L - \frac{1}{c}D_g^{per}).$$
But  $(f_{per})_*(C_1)=C(1)$ and the kernel of $C(1)$ is a line in the $2$-dimensional vector space $NS(\op{A}_g^{per})_{\mathbb{Q}}$ that contains the point $12 L - D_g^{per}$.  So 
in other words, $12 L - \frac{1}{c} D_g^{per}$ must be a rational multiple of $12 L - D_g^{per}$.  This can only happen if $c=1$.
We have shown that $f_{per}^*L = \lambda$ and that $f_{per}^*(12L - D_g^{vor})= 12\lambda - \delta_0$, and so the result is proved.

\end{proof}

\begin{corollary}\label{semiample}All divisors interior to $\mathbb{F}$ are semi-ample, and when $\ovop{M}_g$ is defined over $\mathbb{C}$,
and  $g \le 11$, the divisor $12\lambda - \delta_0$ on $\ovop{M}_g$ is semi-ample.
\end{corollary}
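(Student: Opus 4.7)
The plan is to derive both statements directly from Proposition~\ref{nefpullback}, using only the elementary observation that the pullback of a semi-ample divisor along any morphism is semi-ample. Indeed, if $|mL|$ is base-point-free on $Y$ and $h\colon X\to Y$ is a morphism, then the pullbacks of a collection of global sections of $\mathcal{O}_Y(mL)$ with no common zero on $Y$ have no common zero on $X$, so $|m\,h^*L|$ is base-point-free.

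For the first assertion, I would unwind Proposition~\ref{nefpullback}: it establishes that $f_{per}^*M=\lambda$ and $f_{per}^*(12M-D_g^{per})=12\lambda-\delta_0$, so $f_{per}^*$ sends the two generators of the two-dimensional cone $\operatorname{Nef}(\op{A}_g^{per})$ (computed in \cite{SBFirst}) to the two generators of $\mathbb{F}$ and thus identifies these two-dimensional cones bijectively, taking interior to interior. Since $\op{A}_g^{per}$ is projective (it admits a projective morphism to the projective Satake compactification), Kleiman's criterion identifies the interior of its nef cone with the ample cone. Consequently, every interior point of $\mathbb{F}$ is the pullback under $f_{per}$ of an ample, and in particular semi-ample, divisor on $\op{A}_g^{per}$, and the preceding observation immediately upgrades this to semi-ampleness of the original divisor on $\ovop{M}_g$.

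For the $g\le 11$ claim, I would invoke Shepherd-Barron's result in \cite{SBFirst} to the effect that the extremal divisor $12M-D_g^{per}$ is itself semi-ample on $\op{A}_g^{per}$ in this range (this is the step that seems to require working over $\mathbb{C}$). Combining this with the identity $f_{per}^*(12M-D_g^{per})=12\lambda-\delta_0$ from Proposition~\ref{nefpullback} and with preservation of semi-ampleness under pullback yields at once that $12\lambda-\delta_0$ is semi-ample on $\ovop{M}_g$ for $g\le 11$.

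The main obstacle I anticipate is not really geometric but one of citation and hypothesis-checking: one must confirm that the particular semi-ampleness statement for $12M-D_g^{per}$ in \cite{SBFirst} does indeed hold over $\mathbb{C}$ in the claimed range $g\le 11$, and one must verify the projectivity of $\op{A}_g^{per}$ in order to apply Kleiman's criterion. Beyond these verifications, no additional input is needed, since Proposition~\ref{nefpullback} has already done the substantive work of identifying $\mathbb{F}$ with $f_{per}^*\operatorname{Nef}(\op{A}_g^{per})$.
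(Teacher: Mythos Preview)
Your proposal is correct and follows essentially the same approach as the paper's proof: both parts are deduced from Proposition~\ref{nefpullback} together with Shepherd-Barron's results in \cite{SBFirst}, and your additional remarks (Kleiman's criterion, projectivity of $\op{A}_g^{per}$, preservation of semi-ampleness under pullback) merely make explicit the justifications that the paper leaves implicit.
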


\begin{proof}By Theorem \ref{nefpullback}, the elements interior to $\mathbb{F}$ are semi-ample. In \cite{SBFirst}, Shepherd-Barron shows that for $g \le 11$, as long as  $\ovop{M}_g$ is defined over $\mathbb{C}$,
then $12M-D_g^{vor}$, which we show pulls back by $f_{per}$ to $12\lambda-\delta_0$,  is semi-ample. \end{proof}

 In \cite{Rulla}, it is shown that every nef divisor on $\ovop{M}_3$ is semi-ample, and so in particular, $12\lambda-\delta_0$ was known to be semi-ample in that case.   
 
\section{The divisors that give the extensions of the Torelli map}\label{face}
In this section, we prove that the pullback of an ample divisor under any extension of the Torelli map lies on $\mathbb{F}$.

\begin{theorem}Let $X$ be any compactification of $\op{A}_g$ that contains Mumford's partial compactification $\op{A}_g^{part}$ and maps to Satake's compactification $\op{A}_g^{Sat}$.  
Then if $f: \ovop{M}_g \longrightarrow X$ is any extension of the Torelli map and $A$ any ample divisor on $X$, there exists a constant $c >0$ and  an $\epsilon > 0$  for which
$f^*(cA)=(12+\epsilon)\lambda - \delta_0$.
\end{theorem}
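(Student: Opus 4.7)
The plan is to adapt the argument from the proof of Proposition~\ref{nefpullback}, where the analogous statement was established for the specific extension $f_{per}: \ovop{M}_g \to \op{A}_g^{per}$, to the setting of a general $X$, and then to use the ampleness of $A$ (rather than merely its nefness) to promote the conclusion from ``$f^*(A)$ lies in $\mathbb{F}$'' to ``$f^*(A)$ lies in the interior of $\mathbb{F}$''.

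First I would reproduce the commutative diagram from the proof of Proposition~\ref{nefpullback}, with $X$ in place of $\op{A}_g^{per}$. Since $\op{Pic}(\op{A}_g^{part})_{\mathbb{Q}}$ is generated by the Hodge class $M$ and the boundary divisor $D_g$, one may write $i^*(A) = \alpha M - \beta D_g$. Because $\phi$ factors through $\op{A}_g$, we have $\phi^*(D_g)=0$, and commutativity gives $j^*(f^*(A)) = \alpha\, j^*(\lambda)$. Applying Lemma~\ref{keylemma}, which identifies the kernel of $j^*$ with $\mathbb{Q}\delta_0$, forces $f^*(A) = \alpha\lambda - b\delta_0$ for some $\alpha, b \in \mathbb{Q}$. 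To relate $b$ to $\beta$, I would pick a lift $A' := \alpha\,\eta^*(M) - \beta\, D_g^X$ to $X$; then $E := A - A'$ restricts to zero on $\op{A}_g^{part}$, hence is supported on $X \setminus \op{A}_g^{part}$. The preimage $f^{-1}(X \setminus \op{A}_g^{part})$ parametrizes stable curves whose Jacobian has torus rank $\geq 2$, i.e.\ stable curves with at least two non-separating nodes, a locus of codimension $\geq 2$ in $\ovop{M}_g$. Hence $f^*(E) = 0$ as a divisor class, and similarly $f^*(D_g^X) = d\,\delta_0$ for some positive integer $d$ (because $f^{-1}(D_g^X) \subset \Delta_0$ and $f$ restricted to $\Delta_0^{\circ}$ maps non-constantly onto an open part of $D_g$). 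Consequently $b = \beta d$.

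For the strict inequalities I would use ampleness directly. If $b=0$ then $\beta d = 0$, so $\beta=0$, giving $A|_{\op{A}_g^{part}} = \alpha M$. Taking any complete curve $C \subset D_g \subset \op{A}_g^{part}$ lying in a single fiber of the fibration $\eta|_{D_g}: D_g \to \op{A}_{g-1}$ (fibers are Kummer varieties $A/\{\pm 1\}$ of positive dimension $g-1$, hence contain complete curves for $g \geq 2$), we obtain $A \cdot C = \alpha\, M \cdot C = 0$, contradicting ampleness; so $b>0$. For $\alpha > 12b$, I would use the $F$-curve $C_1$ of elliptic tails: since $\lambda \cdot C_1 = 1/12 \neq 0$, the composition $\eta \circ f = f_{Sat}$ does not contract $C_1$, hence neither does $f$, so $f_*(C_1)$ is a curve in $X$; then ampleness yields $f^*(A) \cdot C_1 > 0$, which by the intersection table reads $\alpha/12 - b > 0$. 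Setting $c = 1/b$ and $\epsilon = \alpha/b - 12 > 0$ then yields $f^*(cA) = (12+\epsilon)\lambda - \delta_0$, as claimed.

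I expect the main obstacle to be the codimension claim used in the second paragraph and the identification of $f^*(D_g^X)$ as a positive multiple of $\delta_0$, both of which rely on understanding how $f$ interacts with the boundary stratification of $X$. Once those are in hand, the argument for the strict inequalities is a direct application of ampleness against the $F$-curve $C_1$ and against curves in fibers of $\eta|_{D_g}$, using Lemma~\ref{keylemma} and the intersection numbers tabulated in Section~\ref{background}.
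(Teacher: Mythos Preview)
Your reduction to $f^*(A)=\alpha\lambda - b\,\delta_0$ via the commutative diagram and Lemma~\ref{keylemma} is exactly what the paper does. The divergence is in how the strict inequalities $b>0$ and $\alpha>12b$ are obtained.

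The paper does \emph{not} relate $b$ to $\beta$ and never introduces $D_g^X$ or a codimension--$2$ argument. Instead it first uses nefness of $f^*(A)$ on $\ovop{M}_g$ (intersecting with $F$-curves) to get $\alpha\ge 12b\ge 0$, and then argues by Picard number: if $b=0$ or $\alpha=12b$ then $f^*(A)$ lies on an extremal ray of $\operatorname{Nef}(\ovop{M}_g)$, so the image $f(\ovop{M}_g)$ would have Picard number $1$; the paper says this contradicts the hypothesis that $\op{A}_g^{part}$, which has Picard number $2$, embeds in $X$. Your route is different and more explicit: you manufacture the identity $b=\beta d$ (via the codimension--$2$ locus $f^{-1}(X\setminus\op{A}_g^{part})$) and then feed ampleness of $A$ directly into the projection formula, against the $F$-curve $C_1$ for $\alpha>12b$ and against a complete curve in a Kummer fibre of $D_g\to\op{A}_{g-1}$ for $\beta>0$ (hence $b>0$). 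Your argument makes the use of ampleness transparent and avoids the Picard-number step; the cost is the extra bookkeeping in your ``step~2'', which the paper sidesteps entirely.

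Two small points on your step~2. First, writing $A'=\alpha\,\eta^*(M)-\beta\,D_g^X$ tacitly assumes that the closure $D_g^X$ is $\mathbb{Q}$-Cartier on $X$, which you have not justified for a general $X$. You can bypass this cleanly: set $\op{M}_g^1:=f^{-1}(\op{A}_g^{part})$; your own codimension observation gives $\op{Pic}(\op{M}_g^1)_{\mathbb{Q}}\cong\op{Pic}(\ovop{M}_g)_{\mathbb{Q}}$, and comparing $(f^*A)|_{\op{M}_g^1}$ with $f_1^*(i^*A)=\alpha\lambda-\beta f_1^*(D_g)$ on $\op{M}_g^1$ yields $b=\beta d$ without ever lifting $D_g$ to $X$. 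Second, from $i^*E=0$ in $\op{Pic}(\op{A}_g^{part})$ you should conclude that $E$ is linearly equivalent to a divisor supported on $X\setminus\op{A}_g^{part}$, not that $E$ itself is so supported; this is harmless for computing $f^*E$, but worth stating correctly.
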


\begin{proof}  
Let $X$ be any compactification of $\op{A}_g$ for which there is a morphism $\eta: X \longrightarrow \op{A}_g^{Sat}$,
and such that $\eta^{-1}(\op{A}_g \cup \op{A}_{g-1})=\op{A}_g^{part}=\op{A}_g \cup {D}_g$ is Mumford's partial compactification of $\op{A}_g$. 
Suppose that $f: \ovop{M}_g \longrightarrow X$ any morphism that extends the Torelli map.  In particular, 
 $f_{Sat}=\eta \circ f: \ovop{M}_g \longrightarrow \op{A}_g^{Sat}$ denotes the extension of the Torelli map from $\ovop{M}_g$ to the Satake compactification $\op{A}_g^{Sat}$.

 We have the following commutative diagram.

\[\xymatrix @C=2cm @R=1cm {
& & X \ar[rd]^{\eta} \\
	\ovop{M}_g \ar[rru]^{f}  & 
	\op{M}_g^* \ar@{_{(}->}[l]_{ \ \ \  j} \ar[r]^{\phi} \ar[ru]  & 
	\op{A}_g^{part} \ar[r] \ar@{^{(}->}[u]_{i}  & 
	\op{A}_g^{Sat}=\op{A}_g \cup {A}_{g-1} \cup \cdots \cup {A}_{0}.
}\]

\noindent
Note that for $M$, the $\mathbb{Q}$-line bundle of modular forms of weight $1$ on $\op{A}_g^{Sat}$, which is the ample generator of $\operatorname{Pic}(\op{A}_g^{Sat})$, then $f_{Sat}^*(M)= \lambda$ and that $f_{Sat}^{-1}(\op{A}_g) =\op{M}_g^{\star}$ is the moduli space of genus $g$ stable curves of compact type  \cite{MumfordGIT}.  We denote by $M$ the pull back of $M$ to the three varieties $\op{M}_g^{\star}$, 
$\op{A}_g^1$ and $X$.

The assumption that there is an embedding of $\op{A}_g^{part}$ into $X$ tells us that the Picard number of $X$ is at least two, for 
 it is  known that $\operatorname{Pic}(\op{A}_g^{part}) \otimes \mathbb{Q}=\mathbb{Q} D_g \oplus \mathbb{Q} M$ (for $g \in \{2,3\}$,  see \cite{vdg}, and for $g \ge 4$, see \cite[p. 355]{MumfordKodaira}).  In particular, this means that if $A$ is any ample divisor on $X$, then $f^*A$ cannot generate an extremal ray of the nef cone of $\ovop{M}_g$, else the image of $\ovop{M}_g$ under $f$ would have Picard rank $1$.

The goal of this proof is to show that, if $A$ is ample on $X$, then there exists an $\epsilon >0$ for which $f^*A=\epsilon \lambda + 12\lambda - \delta_0$, which is on the interior of $\mathbb{F}$.
By Lemma \ref{keylemma}, elements of the kernel of $j^*$ are equivalent to a rational multiple of $\delta_0$.  Let  $\{\lambda, \delta_0, B_3, \ldots, B_d\}$ is any basis for $\operatorname{Pic}(\ovop{M}_g)_{\mathbb{Q}}$.
 Then $f^*(A)=a\lambda - b \delta_0 - \sum_{i=3}^d b_i B_i$, and $j^*(f^*A)=aj^*\lambda -  \sum_{i=3}^d b_i j^*B_i$.
We will first show that the $b_i=0$ for $i \ge 3$.  To do this we'll see that $\sum_{i=3}^d b_i j^*B_i \in ker(j^*)$.

Since $A$ is ample on $X$, and $i: \op{A}_g^{part} \hookrightarrow X$ is an embedding, $i^*A$ is ample on $\op{A}_g^{part}$.  
In \cite[Prop. I.7]{HulekSankaranNef}, it is proved that the nef cone of $\op{A}_g^{part}$
is given by $$\operatorname{Nef}(\op{A}_g^{part})=\{\alpha {L} -\beta {D}_g: \alpha \ge 12\beta  \ge 0\}.^{\footnote{A nef divisor on a quasi-projective variety $X$ is one that nonnegatively intersects all complete curves on X.}}$$  
So we may write $i^*A=\alpha M - \beta D_g$ such that $\alpha \ge 12 \beta \ge 0$.  But since $M$ is nef but not ample, we also know that $\beta >0$,
so $\alpha > 0$.  Moreover,   $\phi^*{D}_g=0$, since $\op{M}_g^{\star}=f_{Sat}^{-1}(\op{A}_g)$.
It follows therefore that  $j^*f^*(A) = \phi^*(i^*(A))= \alpha L.$  Comparing the coefficients of the basis elements, 
$$aj^*\lambda -  \sum_{i=3}^d b_i j^*B_i=aL-  \sum_{i=3}^d b_i j^*B_i=\alpha L,$$
we see that  $a=\alpha$ and $b_i=0$, for $3 \le i \le d$.

We therefore have that $f^*({A})= a \lambda - b \delta_0$, for $b \in \mathbb{Q}_{\ge 0}$.  Moreover, since $f^*A$ is a nef divisor on $\ovop{M}_g$, it must nonnegatively intersect all $F$-curves and so $a \ge 12b \ge 0$.   If $b=0$, then $f=a \lambda$, which by Proposition \ref{extremal} generates an extremal ray of the nef cone, and so the image of the map $f: \ovop{M}_g \longrightarrow X$ has picard number $1$, contradicting the assumption that
$\op{A}_g^{part}$ is embedded in $X$.

 If $a=12 b$, then  $f^*(\frac{1}{b} A ) = 12 \lambda - \delta_0$, which again by Proposition \ref{extremal} generates an extremal ray of the nef cone, and so 
the image of the morphism $f$ will be a variety of Picard rank $1$, giving a contradiction.  

We conclude that since $a > 12 b>0$, we have that $f^*(\frac{1}{b}A)=\frac{a}{b}\lambda - \delta_0 = (12 + \epsilon) \lambda - \delta_0$, for some $\epsilon >0$.

\end{proof}

\begin{corollary}\label{main}Suppose that $f_{\tau}: \ovop{M}_g \longrightarrow {A}^{\tau}_g$
is any Toroidal extension of the Torelli map.  Then the morphism $f_{\tau}$ is given by a sub-linear series of divisor interior to $\mathbb{F}$.
\end{corollary}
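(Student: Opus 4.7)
The plan is to derive this corollary as an immediate consequence of Theorem \ref{general}. The essential verification is that any toroidal compactification $\op{A}_g^\tau$ satisfies both hypotheses of Theorem \ref{general}: namely, that $\op{A}_g^\tau$ contains Mumford's partial compactification $\op{A}_g^{part}$ and admits a morphism to Satake's compactification $\op{A}_g^{Sat}$. Both facts are standard features of the toroidal construction and are already recorded in the introduction: $\op{A}_g^{part}$ is a common sublocus of every $\op{A}_g^\tau$, and for each fan $\tau$ there is a canonical morphism $\eta_\tau: \op{A}_g^\tau \to \op{A}_g^{Sat}$. Thus $X = \op{A}_g^\tau$ is a permissible target in Theorem \ref{general}.

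First I would fix any ample divisor $A$ on $\op{A}_g^\tau$. Applying Theorem \ref{general} to $f_\tau: \ovop{M}_g \to \op{A}_g^\tau$ yields immediately that $f_\tau^*(A)$ lies in the interior of the two-dimensional face $\mathbb{F}$.

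Next I would translate the pullback statement into the language of linear series. Choose $n \gg 0$ so that $nA$ is very ample on $\op{A}_g^\tau$; then the complete linear series $|nA|$ embeds $\op{A}_g^\tau$ into projective space, and composing with $f_\tau$ presents $f_\tau$ as the morphism defined by pulling back the global sections of $nA$. The image of the natural map $H^0(\op{A}_g^\tau, nA) \to H^0(\ovop{M}_g, f_\tau^*(nA))$ is a sub-linear series of $f_\tau^*(nA) = n \cdot f_\tau^*(A)$, and because $\mathbb{F}$ is a cone, this divisor class still lies interior to $\mathbb{F}$. Hence $f_\tau$ is defined by a sub-linear series of a divisor interior to $\mathbb{F}$, as claimed.

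I do not anticipate a substantive obstacle: the content of the corollary is essentially a repackaging of Theorem \ref{general} for the special case of toroidal targets. The only point requiring a moment of care is the step from ``$f_\tau^*(A)$ is interior to $\mathbb{F}$'' to ``$f_\tau$ is given by a sub-linear series of a divisor interior to $\mathbb{F}$,'' which is handled by the standard fact that any morphism to a projective variety is defined by the pullback of the sections of any sufficiently positive line bundle downstairs.
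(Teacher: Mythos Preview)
Your proposal is correct and follows essentially the same approach as the paper: verify that any toroidal compactification $\op{A}_g^{\tau}$ satisfies the two hypotheses of Theorem~\ref{general} (it contains $\op{A}_g^{part}$ and admits a morphism $\eta_\tau$ to $\op{A}_g^{Sat}$), then invoke that theorem. The paper's proof is in fact terser than yours---it simply cites these structural facts and says ``So by Theorem~\ref{general}, the result holds''---whereas you additionally spell out the passage from ``$f_\tau^*(A)$ lies in the interior of $\mathbb{F}$'' to ``$f_\tau$ is given by a sub-linear series of a divisor interior to $\mathbb{F}$,'' which is a reasonable clarification the paper leaves implicit.
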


\begin{proof}
Suppose $f_{\tau}: \ovop{M}_g \longrightarrow \op{A}^{\tau}_g$
is any Toroidal extension of the Torelli map.  There is a morphism $\eta_{\tau}: \op{A}^{\tau}_g \longrightarrow \op{A}_g^{Sat}$ containing
Mumford's partial compactification $\op{A}_g^{part}=\eta_{\tau}^{-1}(\op{A}_g \cup {A}_{g-1})$.  So by Theorem \ref{general}, the result holds.
\end{proof}

\begin{remark}The closure, $\ovop{H}_g$ in $\ovop{M}_g$ of the moduli space of hyperelliptic curves $\op{H}_g \subset \op{M}_g$ is equal to the image of the morphism $h: \ovop{M}_{0,2(g+1)}/S_{2(g+1)} \longrightarrow \ovop{M}_{g}$
defined by taking $(C,\ov{p})\in \ovop{M}_{0,2(g+1)}$
to the stable $n$-pointed curve of genus $g$ obtained by taking a double cover of $C$ branched at the (unordered) marked points.
For $\epsilon=\frac{1}{3\cdot 2^{g-5}}$, and $c=\frac{1}{4}$, one has that $\kappa_1 = h^*(c(\epsilon \lambda + 12\lambda - \delta_0)).$
In particular,  the morphism on $\ovop{M}_{0,2(g+1)}/S_{2(g+1)}$ given by $\kappa_1$ factors through $h$.
  \end{remark}

\begin{remark} In Corollary \ref{semiample} it was shown that a multiple of $12\lambda-\delta_0$ is base point free for $g \le 11$.  It is natural
to look for 
 the variety $Y$ (which must necessarily have Picard number $1$) and the corresponding morphism $f_{m|12\lambda-\delta_0|}: \ovop{M}_g \longrightarrow Y$.
 \end{remark}
 
\begin{bibdiv}
\begin{biblist}

\bib{AlexeevVor}{article}{
   author={Alexeev, Valery},
   title={Complete moduli in the presence of semiabelian group action},
   journal={Ann. of Math. (2)},
   volume={155},
   date={2002},
   number={3},
   pages={611--708},
   issn={0003-486X},
   review={\MR{1923963 (2003g:14059)}},
   doi={10.2307/3062130},
}
		
\bib{AlexeevIntlCon}{article}{
   author={Alexeev, Valery},
   title={Higher-dimensional analogues of stable curves},
   conference={
      title={International Congress of Mathematicians. Vol. II},
   },
   book={
      publisher={Eur. Math. Soc., Z\"urich},
   },
   date={2006},
   pages={515--536},
   review={\MR{2275608 (2008a:14051)}},
}

\bib{AB}{unpublished}{
author = {Alexeev, V.},
author={Brunyate, A.},
title = {Extending Torelli map to toroidal compactifications of Siegel space},
note={eprint arXiv:1102.3425 [math.AG]},
date={2011}
}

\bib{ags}{unpublished}{
author={Alexeev, V.},
author={Gibney, Angela},
author={Swinarski, David},
title={Conformal blocks divisors on $\bar{M}_{0,n}$ from $sl_2$},
year={2010},
note={eprint arXiv:1011.6659v1 [math.AG]}
}

		\bib{AMRT}{book}{
   author={Ash, A.},
   author={Mumford, D.},
   author={Rapoport, M.},
   author={Tai, Y.},
   title={Smooth compactification of locally symmetric varieties},
   note={Lie Groups: History, Frontiers and Applications, Vol. IV},
   publisher={Math. Sci. Press},
   place={Brookline, Mass.},
   date={1975},
   pages={iv+335},
   review={\MR{0457437 (56 \#15642)}},
}

\bib{BCHM}{article}{
author={Birkar, Caucher},
author={Cascini, Paolo},
author={Hacon, Christopher},
author={Mc{K}ernan, James},
title={Existence of minimal models for varieties of log general type},
journal={J. Amer. Math. Soc.},
volume={23},
year={2010},
pages={405--468}
}

\bib{Chai}{book}{
   author={Chai, Ching-Li},
   title={Compactification of Siegel moduli schemes},
   series={London Mathematical Society Lecture Note Series},
   volume={107},
   publisher={Cambridge University Press},
   place={Cambridge},
   date={1985},
   pages={xvi+326},
   isbn={0-521-31253-1},
   review={\MR{853543 (88b:32074)}},
}

\bib{ChaiArithmetic}{article}{
   author={Chai, Ching-Li},
   title={Arithmetic compactification of the Siegel moduli space},
   conference={
      title={Theta functions---Bowdoin 1987, Part 2},
      address={Brunswick, ME},
      date={1987},
   },
   book={
      series={Proc. Sympos. Pure Math.},
      volume={49},
      publisher={Amer. Math. Soc.},
      place={Providence, RI},
   },
   date={1989},
   pages={19--44},
   review={\MR{1013166 (90k:14042)}},
}

\bib{CornalbaHarris}{article}{
   author={Cornalba, Maurizio},
   author={Harris, Joe},
   title={Divisor classes associated to families of stable varieties, with
   applications to the moduli space of curves},
   journal={Ann. Sci. \'Ecole Norm. Sup. (4)},
   volume={21},
   date={1988},
   number={3},
   pages={455--475},
   issn={0012-9593},
   review={\MR{974412 (89j:14019)}},
}

\bib{ER1}{article}{
   author={Erdahl, Robert},
   author={Rybnikov, Konstantin},
   title={On Voronoi's two tilings of the cone of metrical forms},
   note={IV International Conference in ``Stochastic Geometry, Convex
   Bodies, Empirical Measures $\&$ Applications to Engineering Science'',
   Vol. I (Tropea, 2001)},
   journal={Rend. Circ. Mat. Palermo (2) Suppl.},
   number={70},
   date={2002},
   pages={279--296},
   review={\MR{1962573 (2004b:52036)}},
}
	
\bib{ER2}{unpublished}{
author={Erdahl, Robert},
   author={Rybnikov, Konstantin},
title={Voronoi-Dickson Hypothesis on Perfect Forms and L-types},
year={2001},
note={arXiv:math/0112097v1 [math.NT]}
}

\bib{FaberThesis}{article}{
   author={Faber, Carel},
   title={Chow rings of moduli spaces of curves. I. The Chow ring of
   $\overline{\scr M}_3$},
   journal={Ann. of Math. (2)},
   volume={132},
   date={1990},
   number={2},
   pages={331--419},
   issn={0003-486X},
   review={\MR{1070600 (91h:14009a)}},
   doi={10.2307/1971525},
}

\bib{FaltingsChai}{book}{
   author={Faltings, Gerd},
   author={Chai, Ching-Li},
   title={Degeneration of abelian varieties},
   series={Ergebnisse der Mathematik und ihrer Grenzgebiete (3) [Results in
   Mathematics and Related Areas (3)]},
   volume={22},
   note={With an appendix by David Mumford},
   publisher={Springer-Verlag},
   place={Berlin},
   date={1990},
   pages={xii+316},
   isbn={3-540-52015-5},
   review={\MR{1083353 (92d:14036)}},
}

\bib{GKM}{article}{
   author={Gibney, Angela},
   author={Keel, Sean},
   author={Morrison, Ian},
   title={Towards the ample cone of $\overline M_{g,n}$},
   journal={J. Amer. Math. Soc.},
   volume={15},
   date={2002},
   number={2},
   pages={273--294 (electronic)},
   issn={0894-0347},
   review={\MR{1887636 (2003c:14029)}},
   doi={10.1090/S0894-0347-01-00384-8},
}
				
\bib{GruHul}{article}{
   author={Erdenberger, C.},
   author={Grushevsky, S.},
   author={Hulek, K.},
   title={Intersection theory of toroidal compactifications of ${\scr A}_4$},
   journal={Bull. London Math. Soc.},
   volume={38},
   date={2006},
   number={3},
   pages={396--400},
   issn={0024-6093},
   review={\MR{2239033 (2007b:14098)}},
   doi={10.1112/S0024609305018394},
}

\bib{Hassettg2}{article}{
   author={Hassett, Brendan},
   title={Classical and minimal models of the moduli space of curves of
   genus two},
   conference={
      title={Geometric methods in algebra and number theory},
   },
   book={
      series={Progr. Math.},
      volume={235},
      publisher={Birkh\"auser Boston},
      place={Boston, MA},
   },
   date={2005},
   pages={169--192},
 review={\MR{2166084 (2006g:14047)}},
doi={10.1007/0-8176-4417-2-8},
}
	
\bib{HHFlip}{unpublished}{
author={Hassett, Brendan},
author={Hyeon, Donghoon},
title={Log minimal model program for the moduli space of stable curves: The first flip},
year={2008},
note={arXiv:0806.3444v1 [math.AG]}
}

\bib{HassettHyeonDivCon}{article}{
   author={Hassett, Brendan},
   author={Hyeon, Donghoon},
   title={Log canonical models for the moduli space of curves: the first
   divisorial contraction},
   journal={Trans. Amer. Math. Soc.},
   volume={361},
   date={2009},
   number={8},
   pages={4471--4489},
   issn={0002-9947},
  review={\MR{2500894 (2009m:14039)}},
 doi={10.1090/S0002-9947-09-04819-3},
}

  \bib{HulekNef}{article}{
   author={Hulek, Klaus},
   title={Nef divisors on moduli spaces of abelian varieties},
   conference={
      title={Complex analysis and algebraic geometry},
   },
   book={
      publisher={de Gruyter},
      place={Berlin},
   },
   date={2000},
   pages={255--274},
   review={\MR{1760880 (2001d:14046)}},
}
\bib{HulekSankaranSiegel}{article}{
   author={Hulek, Klaus},
   author={Sankaran, G. K.},
   title={The geometry of Siegel modular varieties},
   conference={
      title={Higher dimensional birational geometry},
      address={Kyoto},
      date={1997},
   },
   book={
      series={Adv. Stud. Pure Math.},
      volume={35},
      publisher={Math. Soc. Japan},
      place={Tokyo},
   },
   date={2002},
   pages={89--156},
   review={\MR{1929793 (2003h:11047)}},
}

\bib{HulekSankaranNef}{article}{
   author={Hulek, K.},
   author={Sankaran, G. K.},
   title={The nef cone of toroidal compactifications of $\scr A_4$},
   journal={Proc. London Math. Soc. (3)},
   volume={88},
   date={2004},
   number={3},
   pages={659--704},
   issn={0024-6115},
   review={\MR{2044053 (2005a:14061)}},
   doi={10.1112/S0024611503014564},
}

\bib{HyeonLeeHyperelliptic}{article}{
   author={Hyeon, Donghoon},
   author={Lee, Yongnam},
   title={A new look at the moduli space of stable hyperelliptic curves},
   journal={Math. Z.},
   volume={264},
   date={2010},
   number={2},
   pages={317--326},
   issn={0025-5874},
   review={\MR{2574979}},
   doi={10.1007/s00209-008-0466-7},
}

\bib{HyeonLee3}{article}{
   author={Hyeon, Donghoon},
   author={Lee, Yongnam},
   title={Stability of bicanonical curves of genus three},
   journal={J. Pure Appl. Algebra},
   volume={213},
   date={2009},
   number={10},
   pages={1991--2000},
   issn={0022-4049},
   review={\MR{2526872 (2010d:14003)}},
   doi={10.1016/j.jpaa.2009.02.015},
}

\bib{HyeonLee2}{article}{
   author={Hyeon, Donghoon},
   author={Lee, Yongnam},
   title={Stability of tri-canonical curves of genus two},
   journal={Math. Ann.},
   volume={337},
   date={2007},
   number={2},
   pages={479--488},
   issn={0025-5831},
   review={\MR{2262795 (2007k:14052)}},
   doi={10.1007/s00208-006-0046-2},
}

\bib{Igusagenus2}{article}{
   author={Igusa, Jun-ichi},
   title={On Siegel modular forms of genus two},
   journal={Amer. J. Math.},
   volume={84},
   date={1962},
   pages={175--200},
   issn={0002-9327},
   review={\MR{0141643 (25 \#5040)}},
}

		\bib{Igusa}{article}{
   author={Igusa, Jun-ichi},
   title={A desingularization problem in the theory of Siegel modular
   functions},
   journal={Math. Ann.},
   volume={168},
   date={1967},
   pages={228--260},
   issn={0025-5831},
   review={\MR{0218352 (36 \#1439)}},
}

\bib{KollarEBF}{article}{
   author={Koll{\'a}r, J{\'a}nos},
   title={Effective base point freeness},
   journal={Math. Ann.},
   volume={296},
   date={1993},
   number={4},
   pages={595--605},
   issn={0025-5831},
   review={\MR{1233485 (94f:14004)}},
   doi={10.1007/BF01445123},
}

\bib{KollarProjectivity}{article}{
   author={Koll{\'a}r, J{\'a}nos},
   title={Projectivity of complete moduli},
   journal={J. Differential Geom.},
   volume={32},
   date={1990},
   number={1},
   pages={235--268},
   issn={0022-040X},
   review={\MR{1064874 (92e:14008)}},
}

\bib{KollarMori}{book}{
   author={Koll{\'a}r, J{\'a}nos},
   author={Mori, Shigefumi},
   title={Birational geometry of algebraic varieties},
   series={Cambridge Tracts in Mathematics},
   volume={134},
   note={With the collaboration of C. H. Clemens and A. Corti;
   Translated from the 1998 Japanese original},
   publisher={Cambridge University Press},
   place={Cambridge},
   date={1998},
   pages={viii+254},
   isbn={0-521-63277-3},
   review={\MR{1658959 (2000b:14018)}},
   doi={10.1017/CBO9780511662560},
}

\bib{MumfordAbelian3}{article}{
   author={Mumford, D.},
   title={On the equations defining abelian varieties. III},
   journal={Invent. Math.},
   volume={3},
   date={1967},
   pages={215--244},
   issn={0020-9910},
   review={\MR{0219542 (36 \#2622)}},
}
	
\bib{MumfordKodaira}{article}{
   author={Mumford, David},
   title={On the Kodaira dimension of the Siegel modular variety},
   conference={
      title={Algebraic geometry---open problems},
      address={Ravello},
      date={1982},
   },
   book={
      series={Lecture Notes in Math.},
      volume={997},
      publisher={Springer},
      place={Berlin},
   },
   date={1983},
   pages={348--375},
   review={\MR{714757 (85d:14061)}},
}

\bib{MumfordEnumerative}{article}{
   author={Mumford, David},
   title={Towards an enumerative geometry of the moduli space of curves},
   conference={
      title={Arithmetic and geometry, Vol. II},
   },
   book={
      series={Progr. Math.},
      volume={36},
      publisher={Birkh\"auser Boston},
      place={Boston, MA},
   },
   date={1983},
   pages={271--328},
   review={\MR{717614 (85j:14046)}},
}

\bib{MumfordGIT}{book}{
   author={Mumford, D.},
   author={Fogarty, J.},
   author={Kirwan, F.},
   title={Geometric invariant theory},
   series={Ergebnisse der Mathematik und ihrer Grenzgebiete (2) [Results in
   Mathematics and Related Areas (2)]},
   volume={34},
   edition={3},
   publisher={Springer-Verlag},
   place={Berlin},
   date={1994},
   pages={xiv+292},
   isbn={3-540-56963-4},
   review={\MR{1304906 (95m:14012)}},
}

\bib{Nakamura}{article}{
   author={Nakamura, Iku},
   title={On moduli of stable quasi abelian varieties},
   journal={Nagoya Math. J.},
   volume={58},
   date={1975},
   pages={149--214},
   issn={0027-7630},
   review={\MR{0393049 (52 \#13860)}},
}

\bib{NamikawaThesis}{article}{
   author={Namikawa, Yukihiko},
   title={On the canonical holomorphic map from the moduli space of stable
   curves to the Igusa monoidal transform},
   journal={Nagoya Math. J.},
   volume={52},
   date={1973},
   pages={197--259},
   issn={0027-7630},
   review={\MR{0337981 (49 \#2750)}},
}

\bib{NamikawaNewCompact}{article}{
   author={Namikawa, Yukihiko},
   title={A new compactification of the Siegel space and degeneration of
   Abelian varieties. II},
   journal={Math. Ann.},
   volume={221},
   date={1976},
   number={3},
   pages={201--241},
   issn={0025-5831},
   review={\MR{0480538 (58 \#697b)}},
}

\bib{Namikawa}{book}{
   author={Namikawa, Yukihiko},
   title={Toroidal compactification of Siegel spaces},
   series={Lecture Notes in Mathematics},
   volume={812},
   publisher={Springer},
   place={Berlin},
   date={1980},
   pages={viii+162},
   isbn={3-540-10021-0},
   review={\MR{584625 (82a:32034)}},
}

\bib{Olsson}{book}{
   author={Olsson, Martin C.},
   title={Compactifying moduli spaces for abelian varieties},
   series={Lecture Notes in Mathematics},
   volume={1958},
   publisher={Springer-Verlag},
   place={Berlin},
   date={2008},
   pages={viii+278},
   isbn={978-3-540-70518-5},
   review={\MR{2446415 (2009h:14072)}},
   doi={10.1007/978-3-540-70519-2},
}

		\bib{Oort}{article}{
   author={Oort, Frans},
   title={Subvarieties of moduli spaces},
   journal={Invent. Math.},
   volume={24},
   date={1974},
   pages={95--119},
   issn={0020-9910},
   review={\MR{0424813 (54 \#12771)}},
}

\bib{Rulla}{unpublished}{
author={Rulla, William},
title={The birational geometry of $\ovop{M}_3$ and $\ovop{M}_{2,1}$},
year={2001},
note={University of Texas Ph.D. Thesis}
}

\bib{Ryshkov}{article}{
   author={Ryshkov, S. S.},
   title={Direct geometric description of $n$-dimensional Vorono\u\i\
   parallelohedra of the second type},
   language={Russian},
   journal={Uspekhi Mat. Nauk},
   volume={54},
   date={1999},
   number={1(325)},
   pages={263--264},
   issn={0042-1316},
   translation={
      journal={Russian Math. Surveys},
      volume={54},
      date={1999},
      number={1},
      pages={264--265},
      issn={0036-0279},
   },
   review={\MR{1706791 (2000i:52024)}},
   doi={10.1070/rm1999v054n01ABEH000129},
}

\bib{Satake}{article}{
   author={Satake, Ichiro},
   title={On the compactification of the Siegel space},
   journal={J. Indian Math. Soc. (N.S.)},
   volume={20},
   date={1956},
   pages={259--281},
   review={\MR{0084842 (18,934c)}},
}

	\bib{SatakeCompactification}{article}{
   author={Satake, Ichir{\^o}},
   title={On representations and compactifications of symmetric Riemannian
   spaces},
   journal={Ann. of Math. (2)},
   volume={71},
   date={1960},
   pages={77--110},
   issn={0003-486X},
   review={\MR{0118775 (22 \#9546)}},
}

\bib{SBFirst}{article}{
   author={Shepherd-Barron, N. I.},
   title={Perfect forms and the moduli space of abelian varieties},
   journal={Invent. Math.},
   volume={163},
   date={2006},
   number={1},
   pages={25--45},
   issn={0020-9910},
   review={\MR{2208417 (2007e:14070)}},
   doi={10.1007/s00222-005-0453-0},
}
	
\bib{Siegel}{book}{
   author={Siegel, C. L.},
   title={Topics in complex function theory. Vol. III},
   series={Wiley Classics Library},
   note={Abelian functions and modular functions of several variables;
   Translated from the German by E. Gottschling and M. Tretkoff;
   With a preface by Wilhelm Magnus;
   Reprint of the 1973 original;
   A Wiley-Interscience Publication},
   publisher={John Wiley \& Sons Inc.},
   place={New York},
   date={1989},
   pages={x+244},
   isbn={0-471-50401-7},
   review={\MR{1013364 (90m:11074)}},
}

\bib{SatakeBoundary}{article}{
   author={Christian, Ulrich},
   title={\"Uber die Uniformisierbarkeit nicht-elliptischer Fixpunkte
   Siegelscher Modulgruppen},
   language={German},
   journal={J. Reine Angew. Math.},
   volume={219},
   date={1965},
   pages={97--112},
   issn={0075-4102},
   review={\MR{0181616 (31 \#5844)}},
}

\bib{vdg}{article}{
   author={van der Geer, Gerard},
   title={The Chow ring of the moduli space of abelian threefolds},
   journal={J. Algebraic Geom.},
   volume={7},
   date={1998},
   number={4},
   pages={753--770},
   issn={1056-3911},
   review={\MR{1642753 (2000b:14009)}},
}

\bib{Weil}{article}{
   author={Weil, Andr{\'e}},
   title={Zum Beweis des Torellischen Satzes},
   language={German},
   journal={Nachr. Akad. Wiss. G\"ottingen. Math.-Phys. Kl. IIa.},
   volume={1957},
   date={1957},
   pages={33--53},
   issn={0065-5295},
   review={\MR{0089483 (19,683e)}},
}

\end{biblist}

\end{bibdiv}

\end{document}